\documentclass[11pt, reqno, a4paper]{amsart}

\usepackage{amsmath}
\usepackage{amsfonts}
\usepackage{amssymb}
\usepackage{bbm}
\usepackage{mathtools}
\usepackage{tikz-cd}
\usepackage{makecell}
\usepackage{booktabs}
\tikzset{
    labl/.style={rotate=270, anchor=south, inner sep=1mm}
}

\usepackage[final]{microtype}

\usepackage[
    backend=biber,
    style=alphabetic,
    natbib=true,
    url=true,
    isbn=false,
    doi=false,
    eprint=false,
    backref=true,
    ]{biblatex}
\AtEveryBibitem{%
  \ifentrytype{software}{}{%
    \clearfield{url}%
    \clearfield{urlyear}%
  }%
}
\DeclareFieldFormat{postnote}{#1}
\usepackage{amsthm}
\usepackage{float}
\usepackage[margin=1.5in]{geometry}

\usepackage[ color = blue!20]{todonotes}
\usepackage{hyperref}

\colorlet{phoen}{red!50!black}
\colorlet{purpur}{purple!45!black}
\colorlet{skog}{green!30!black}
\colorlet{hav}{blue!30!black}
\colorlet{himmel}{purpur!90!white}
\colorlet{qem}{green!45!black}
\hypersetup{
  colorlinks,
  linkcolor={red!50!black},
  citecolor={purple!45!black},
  urlcolor={green!40!black}
}

\usepackage[T1]{fontenc}

\numberwithin{equation}{section}
\theoremstyle{plain}
\newtheorem{theorem}{Theorem}[section]
\newtheorem{definition}[theorem]{Definition}
\newtheorem{proposition}[theorem]{Proposition}
\newtheorem{lemma}[theorem]{Lemma}
\newtheorem{corollary}[theorem]{Corollary}
\newtheorem{conjecture}[theorem]{Conjecture}
\newtheorem{question}[theorem]{Question}

\theoremstyle{remark}
\newtheorem{remark}[theorem]{Remark}
\newtheorem{example}[theorem]{Example}

\newcommand{\N}{\mathbb{N}}
\newcommand{\Z}{\mathbb{Z}}
\newcommand{\C}{\mathbb{C}}

\newcommand{\Q}{\mathbb{Q}}
\newcommand{\F}{\mathbb{F}}
\newcommand{\PP}{\mathbb{P}}
\renewcommand{\P}{\mathbb{P}}
\newcommand{\abs}[1]{\lvert #1 \rvert} 

\newcommand{\PSL}{\mathrm{PSL}}
\newcommand{\SL}{\mathrm{SL}}
\newcommand{\PGL}{\mathrm{PGL}}
\newcommand{\GL}{\mathrm{GL}}
\newcommand{\mf}[1]{\mathfrak{#1}}
\newcommand{\n}{\mf{n}}
\newcommand{\p}{\mf{p}}
\newcommand{\Ind}[3]{\mathrm{Ind}^{#1}_{#2}(#3)}
\newcommand{\CoInd}[3]{\mathrm{CoInd}^{#1}_{#2}(#3)}
\newcommand{\res}{\mathrm{res}}

\newcommand{\im}{\mathrm{im}}
\newcommand{\Gal}{\mathrm{Gal}}

\newcommand{\Mod}[1]{\ (\mathrm{mod}\ #1)}

\newcommand{\OO}{\mathcal{O}}

\newcommand{\sengun}{\c{S}eng\"{u}n}

\newcommand{\tors}{\mathrm{tors}}

\newcommand{\LMRefDB}[2]{\href{#1}{\texttt{#2}}}

\DeclareMathOperator{\Ad}{Ad}
\DeclareMathOperator{\Hom}{Hom}
\DeclareMathOperator{\Ext}{Ext}
\DeclareMathOperator{\Stab}{Stab}
\DeclareMathOperator{\Sh}{Sh}
\DeclareMathOperator{\tr}{Tr}
\DeclareMathOperator{\red}{red}

\DeclareMathOperator{\parab}{par}
\DeclareMathOperator{\Frob}{Frob}

\DeclareMathOperator{\norm}{Nm}

\newcommand*{\defeq}{\mathrel{\vcenter{\baselineskip0.5ex \lineskiplimit0pt
      \hbox{\scriptsize.}\hbox{\scriptsize.}}}%
  =}
\newcommand*{\eqdef}{=\mathrel{\vcenter{\baselineskip0.5ex \lineskiplimit0pt
      \hbox{\scriptsize.}\hbox{\scriptsize.}}}%
  }

\title{Lifting ethereal Bianchi modular forms}
\author{Lewis Combes \& H\aa vard Damm-Johnsen}
\date{\today}
\begin{document}

\begin{abstract}
  Mod $p$ Hecke eigenforms for congruence subgroups of $\SL_2(\Z)$ which do not lift to characteristic $0$ have received much attention, and were named {\em ethereal modular forms} by G.~Schaeffer.
  In this paper, we study liftability for mod $p$ Bianchi modular forms: computations suggest that a lift to characteristic zero may always be possible by increasing the level, as in the case of classical modular forms.
  
  We develop efficient algorithms for computing degeneracy maps in cohomology, which allow us to rigorously verify eigenvalue congruences.
  For small primes and certain fields, we also prove that lifts always exist as a consequence of the recent modularity theorem due to Caraiani--Newton.
  A \texttt{Magma} implementation, complemented by LMFDB data due to Cremona, shows that in favourable parameter ranges, most mod $p$ Bianchi modular forms lift.
\end{abstract}

\maketitle
\tableofcontents

\section{Introduction}
Let $p$ be a rational prime.
The subtle interplay between mod $p$ modular forms and characteristic $0$ modular forms has been a source of inspiration for mathematicians for several decades.
With Katz's definition of mod $p$ modular forms as sections of line bundles over modular curves over $\overline{\F}_p$, it is well-known that not every mod $p$ modular form arises from a characteristic $0$ form of the same level.
Following \cite{schaeffer2012}, we call such a form an \emph{ethereal} mod $p$ modular form.

The first example of an ethereal form in weight $1$ was found by Mestre (see \cite{edixhoven2006}) with $p =2$, and more sporadic examples were found by Buzzard \cite{buzzard2016}.
More recently, using a modularity theorem for weight $1$ forms, \cite{calegari2018} proved the existence of such forms for any prime $p \ge 3$, showing that this is in some sense a generic phenomenon.
For higher weight, \cite{kobin2025} gave a precise criterion for the existence of ethereal modular forms with level $\Gamma_0(N)$, $N \in \N$.

Now fix $d > 0$ squarefree, let $K = \Q(\sqrt{-d})$ be an imaginary quadratic field with ring of integers $\OO_K$, and fix a congruence subgroup $\Gamma_0 (\mf{n}) $ of $\Gamma_d = \PGL_2(\OO_K)$.
Following \cite{sengun2009}, we define a mod $p$ Bianchi modular form of level $\Gamma_0(\mf{n})$ and trivial weight to be a class $f \in H^{1}(\Gamma_0(\mf{n}), \F_{p})$.
This cohomology group admits a natural action of a Hecke algebra, and we call an eigenvector $f$ a mod $p$ Bianchi eigenform.
By analogy with elliptic modular forms, we say that a mod $p$ Bianchi eigenform is \emph{ethereal} if it does not arise as the reduction of a characteristic zero eigenvalue system.

Over $\Q$, one can always find a characteristic $0$ lift of an eigenform after possibly increasing the level at auxiliary primes, by the proof of Serre's conjecture due to Khare--Wintenberger.
A natural question, raised in \cite[Section 1]{fakhruddin2021} and \cite[Question 9.3.3]{calegari2012a}, is whether one may similarly lift mod $p$ Bianchi eigenforms after possibly raising the level.

\begin{question}\label{q:main-question}
  Let $f \in H^1(\Gamma_0 (\mf{n}), \overline{\F}_p)$ be an ethereal mod $p$ Bianchi eigenform.
  Does there exist an $\OO_K$-ideal $\mf{r}$ and a Hecke eigenvalue system $F$ in $ H^1(\Gamma_0 (\mf{nr}), \C)$ such that $a_\lambda (F) \equiv a_\lambda (f) \Mod p$ for all primes $(\lambda, \mf{nr} \cdot p )= 1$?\footnote{Here we are a bit imprecise about what we mean by reduction mod $p$. For a precise statement, see Definition \ref{def:liftable}.}
\end{question}
Informally, we say that such an $f$ \emph{lifts at higher level}.

For elliptic modular forms, the weight and Nebentype character play important roles in lifting questions, but we restrict our attention to trivial weight and character for simplicity, noting only that many of the computational methods extend.

By an impressive series of results (including \cite{harris2016} and, concretely, \cite[Corollary 5.4.3]{scholze2015}), a mod $p$ Bianchi eigenform $f$ gives rise to a Galois representation $\overline{\rho}_f : \Gal(\overline{K}/K) \to \GL_2 (\overline{\F}_p)$.
More precisely, for almost all primes $\lambda$ of $\OO_K$, the trace of Frobenius $\tr \overline{\rho}_f(\Frob_\lambda)$ equals the $T_\lambda$-eigenvalue $a_\lambda (f)$ of $f$.
Conversely, a putative version of Serre's conjecture for $K$, first explored by \cite{figueiredo1999}, \cite{sengun2008} and \cite{torrey2012}, predicts that certain mod $p$ Galois representations $\overline{\rho}$ are of the form $\overline{\rho}_f$ for suitable Bianchi eigenforms $f$.

As a particular case, or more precisely, by an analogue of the
Fontaine--Mazur conjecture, some Galois representations attached to mod $p$ eigenforms with $\F_p$-coefficients should arise in the $p$-torsion of elliptic curves over imaginary quadratic fields.
As suggested in \cite{calegari2012a}, when $p$ is small and the eigenvalues $a_{\lambda} (f)$ lie in $\F_{p}$, one can use this to answer Question \ref{q:main-question}.
Using the recent breakthrough due to Caraiani--Newton \cite{caraiani2023} on modularity of elliptic curves over a large proportion of imaginary quadratic fields, we have the following result.

\begin{theorem}[Proposition \ref{prop:small-prime-liftability}]
  Let $p$ be $2$, $3$, or $5$, and let $f$ be a mod $p$ Bianchi eigenform over $K$ with Hecke eigenvalues in $\F_{p}$.
  Suppose that the associated Galois representation $\rho_f$ satisfies $\det \rho_f = \chi_{\mathrm{cyc}}$, and assume that the rank of $X_{0}(15)(K)$ is $0$.
  Then $f$ lifts at higher level.
\end{theorem}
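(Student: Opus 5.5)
The strategy is to realise $\overline{\rho}_f$ as the mod $p$ Galois representation of an elliptic curve $E/K$, then use Caraiani--Newton to get a characteristic zero Bianchi eigenform attached to $E$. That form will be the lift of $f$ at some higher level.

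\emph{Step 1: find the curve.} Write $\overline{\rho} = \overline{\rho}_f : \Gal(\overline{K}/K) \to \GL_2(\F_p)$. This is valued in $\F_p$ after semisimplification and conjugation, since the traces lie in $\F_p$ and $p \le 5$ is small. Because $\det \overline{\rho}$ is the mod $p$ cyclotomic character, the twisted modular curve $X_{\overline{\rho}}(p)$ parametrises elliptic curves $E$ with $E[p] \simeq \overline{\rho}$ compatibly with the Weil pairing. For $p = 2,3,5$ this curve is a $K$-form of $X(p)$, which has genus $0$. It has a $K$-rational point, because the canonical model has a rational cusp or, equivalently, by the classical Rubin--Silverberg / Shepherd-Barron--Taylor result that such twists are isomorphic to $\P^1$ over the base. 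So there are infinitely many $j$-invariants $E/K$ with $\overline{\rho}_{E,p} \simeq \overline{\rho}$, and by Hilbert irreducibility we may also choose $E$ without CM and with no further exceptional structure.

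\emph{Step 2: apply modularity.} The Caraiani--Newton theorem \cite{caraiani2023} says that every elliptic curve over $K$ is modular when $K$ is imaginary quadratic and $X_0(15)(K)$ has rank $0$. This is exactly the hypothesis on $X_0(15)(K)$ in the statement. So $E$ corresponds to a cuspidal Bianchi eigenform $F$ of trivial weight and some level $\Gamma_0(\mf{m})$, with $a_\lambda(F) = a_\lambda(E) \in \Z$. For primes $\lambda \nmid \mf{m} p$ we then have $a_\lambda(F) = \tr \rho_{E,p}(\Frob_\lambda) \equiv \tr \overline{\rho}(\Frob_\lambda) = a_\lambda(f) \Mod p$, using the stated compatibility between the Hecke eigenvalues of $f$ and $\overline{\rho}_f$ for almost all primes. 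Enlarging the exceptional set by finitely many primes if necessary, we get a common level $\mf{nr}$ with $\mf{m} \mid \mf{nr}$. The oldform $F$ then gives an eigenvalue system in $H^1(\Gamma_0(\mf{nr}), \C)$, and this is a lift in the sense of Definition \ref{def:liftable}.

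\emph{Main obstacle.} The hardest part is Step 1 when $\overline{\rho}$ is reducible, and making the congruence hold for \emph{all} $\lambda$ coprime to $\mf{nr} p$ rather than almost all. In the reducible case I would first try to pass to semisimplifications, which should suffice because Definition \ref{def:liftable} is phrased through eigenvalues, i.e.\ traces. One still has to check that a split representation $\chi \oplus \chi^{-1}\chi_{\mathrm{cyc}}$ arises from the $p$-torsion of some $E$. For this I would use the fact that $X_{\overline{\rho}}(p)$ still has a $K$-point, or else use rational points on $X_1(p)$-type curves, which have genus $0$ for $p \le 5$.

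For the "all $\lambda$" issue, the primes where $f$'s eigenvalues are not yet matched with Frobenius traces are finite in number. I would put them into $\mf{r}$, using that $\overline{\rho}_f$ is unramified outside $\mf{n}p$ and that the eigenvalue–trace identity holds away from the level. Choosing $\mf{r}$ to absorb the conductor of $E$ then finishes the argument.
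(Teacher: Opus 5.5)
Your proposal is correct and follows the paper's route: realise $\overline{\rho}_f$ as $E[p]$ for some $E/K$ via a $K$-point of the genus-$0$ twist $X_{\overline{\rho}}(p)\cong\P^1_K$, then apply Caraiani--Newton. (The paper cites \cite[Theorem 2]{shepherd-barron1997} for $p=3,5$, and for $p=2$ takes $y^2=g(x)$ with $g$ a cubic whose splitting field is $\overline{K}^{\ker\overline{\rho}}$.) Two corrections: the twisted curve generally has no $K$-rational cusp, since its cusps are permuted through $\overline{\rho}$ and none is rational when $\overline{\rho}$ is irreducible, so the $K$-point must come from Shepherd-Barron--Taylor; and realisability over $\F_p$ comes from Brauer--Nesbitt plus triviality of Brauer groups of finite fields, not from $p$ being small. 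Your extra care in avoiding CM curves and absorbing the finitely many exceptional primes into $\mf{r}$ is sound and slightly more explicit than the paper.
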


For other primes, we do not expect this strategy to extend; see Remark \ref{rmk:dieulefait}.
Instead, we give extensive computational evidence by detecting characteristic $0$ lifts of ethereal Bianchi eigenforms using data from the LMFDB \cite{lmfdb}, computed by Cremona \cite{cremona1984}.
We use algorithms based on \cite{sengun2011} to detect primes $p$ for which $H^{1}(\Gamma_0(\mf{n}), \mathbb{F}_{p})$ admits ethereal eigenforms, and using the \emph{Fox calculus}, compute $H^{1}(\Gamma_0(\mf{n}), \mathbb{F}_{p})$ and its Hecke eigenvalue systems, as well as eigenvalue systems in characteristic $0$.
If we find an apparent congruence for a fixed number of eigenvalues, we may often verify that the corresponding characteristic $0$ class lifts the mod $p$ one using the method of Section \ref{sec:verifying-lifts}.

\begin{theorem}[Theorem \ref{thm:results}]
  Across the fields $K = \Q(\sqrt{-d})$, levels $\Gamma_0(\mf{n})$ and characteristics $p$ with
  $d \in \{1,2,3,7,11\}$, $\norm(\mf{n}) \le 500$ and $2 < p < 20$, over $78.6\%$ of all ethereal mod $p$ Bianchi modular forms admit higher-level characteristic $0$ lifts.

  With no restrictions on $p$ and $\norm(\mf{n}) \le 1000$, at least $43.8\%$ lift. 
\end{theorem}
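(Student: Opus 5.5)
This is a computational result, so the plan is an exhaustive enumeration followed by a rigorous certification of each congruence found.

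\textbf{Step 1: find the candidate primes.} For each $d \in \{1,2,3,7,11\}$ the group $\Gamma_d$ has an explicit presentation, since these are the Euclidean fields. For each ideal $\mf{n}$ with $\norm(\mf{n}) \le 1000$ I would build a finite presentation of $\Gamma_0(\mf{n})$ by Reidemeister--Schreier on the coset space $\PP^1(\OO_K/\mf{n})$. Fox calculus applied to this presentation gives an integral relation matrix computing $H^1(\Gamma_0(\mf{n}),\Z)$. Its Smith normal form gives the abelianisation $H_1(\Gamma_0(\mf{n}),\Z)$, following \cite{sengun2011}. The primes $p$ dividing the torsion of $H_1$ are the only ones at which $\dim H^1(\Gamma_0(\mf{n}),\F_p)$ can exceed the characteristic $0$ dimension. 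So only these primes can carry ethereal eigenforms.

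\textbf{Step 2: decompose both sides into eigensystems.} For each such $p$ I would compute the Hecke operators $T_\lambda$, for $\norm(\lambda)$ up to a bound, on $H^1(\Gamma_0(\mf{n}),\F_p)$ and on $H^1(\Gamma_0(\mf{n}),\Q)$. I would then split both spaces into generalised eigenspaces. A mod $p$ eigensystem is ethereal exactly when it does not occur among the reductions of the characteristic $0$ systems at the same level, in the sense of Definition \ref{def:liftable}. I would tabulate all ethereal systems, counted with the multiplicity convention used in the statement.

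\textbf{Step 3: search for lifts at higher level.} For each ethereal $f$ I would compare its eigenvalues, at primes away from $\mf{nr}p$, with the characteristic $0$ Bianchi newforms of level $\mf{nr}$ with $\norm(\mf{nr}) \le 1000$. These come from the LMFDB data of Cremona \cite{lmfdb}, \cite{cremona1984}. A match means agreement modulo some prime above $p$ in the Hecke field. Any apparent match still has to be made rigorous. I would use the degeneracy-map algorithms of Section \ref{sec:verifying-lifts}: pull $f$ back to level $\mf{nr}$ via the degeneracy maps and show that the resulting mod $p$ eigensystem coincides with the reduction of a lattice in the characteristic $0$ eigenspace. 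This can be checked with a Sturm-type bound or by exhibiting the lattice reduction explicitly. Counting the verified lifts then gives the $78.6\%$ figure in the range $2<p<20$, $\norm(\mf n)\le 500$. Over the whole range $\norm(\mf{n}) \le 1000$ with no restriction on $p$, the same count gives the lower bound $43.8\%$. It is only a lower bound because unverified forms are counted as non-lifting.

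\textbf{Main obstacle.} The hardest step is rigour in Step 3, since an agreement of finitely many eigenvalues does not by itself prove a congruence. The way around this is to show that the two mod $p$ classes at level $\mf{nr}$ generate the same Hecke module. Concretely, the images of the degeneracy maps $H^1(\Gamma_0(\mf{n}),\F_p)\to H^1(\Gamma_0(\mf{nr}),\F_p)$ must meet the reduction of the integral characteristic $0$ eigenspace. Computing these maps efficiently at large level is the main computational cost.
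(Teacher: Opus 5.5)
Your pipeline matches the paper's: torsion in $H_1$ locates the ethereal characteristics, Fox calculus gives the mod $p$ eigensystems, these are matched against characteristic $0$ systems at levels $\mf{nr}$, and degeneracy maps certify the matches. The gap is in the step that is supposed to produce the percentages. You count only lifts certified by showing $\overline{F} \in \mathrm{span}\{\alpha_{\mf{d}}(f)\}$, or by a Sturm-type bound, and assert that this count gives $315/401$ and $1144/2608$. It does not.

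The $1144$ is the ``Lifts'' column of Table~\ref{table:all-lifts}. These are probable lifts: their eigenvalues agree with $f$ modulo a prime above $p$ at the primes $\mf{q} \nmid \mf{nr}p$ of norm up to about $100$. The ``Verified'' column sums to only $623$, roughly $24\%$ of $2608$. The $315/401$ figure is presumably the same kind of count.

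The degeneracy-map certificate works only when $\overline{F}$ lies in the image of the $\alpha_{\mf{d}}$. The paper exhibits a probable lift where it does not: the mod $2$ form at level \texttt{289.1} over $\Q(\sqrt{-3})$, with candidate lift at level $(68)$. Your Sturm-bound fallback does not rescue this. The paper has no such bound for torsion classes in $H^1(\Gamma_0(\mf{nr}),\F_p)$, which have no $q$-expansions. Under your counting rule you would certify roughly $24\%$, not $43.8\%$. To reach the stated figures you must count numerically matched lifts, as the paper does, and then the numbers are computational evidence rather than a certified lower bound.

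Your search space is also too small, and of the wrong kind. Requiring $\norm(\mf{nr}) \le 1000$ leaves no room for any $\mf{r} \ne (1)$ once $\norm(\mf{n}) > 500$. It also excludes most of the paper's examples, whose lifts have levels of norm $1282$, $1465$, $2300$, $4624$ and $7891$. The paper instead searches LMFDB data up to norm $50{,}000$--$150{,}000$ (Table~\ref{table:lmfdb-data}).

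Moreover, the LMFDB stores only rational newforms. Lifts with non-rational Hecke field are needed for every system not valued in $\F_p$, and for $\F_p$-systems ruled out for rational lifts by the Ramanujan bound (Remark~\ref{rmk:blocking}). Finding these requires a separate characteristic $0$ computation in \texttt{Magma}, at levels $\mf{nr}$ with $\mf{r}$ supported on level-raising primes. LMFDB lifts alone account for $992$ of the $2608$ systems (Table~\ref{table:lmfdb-lifts}), about $38\%$.

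Finally, the paper's denominators $401$ and $2608$ exclude cyclotomic-Eisenstein systems. Your Step~2 tabulates all ethereal systems, so your denominators would not match.
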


\begin{remark}
	We restrict to these particular fields because they are exactly the Euclidean imaginary quadratic fields, giving a simple word decomposition algorithm for the groups $\PSL_2(\OO_K)$. We do not pursue the same question of lifting ethereal eigenvalue systems over other fields in this paper. 
\end{remark}

For certain discriminants, we can do better: for $\Q(\sqrt{-3})$, the LMFDB data is more extensive than for the other discriminants, and we find lifts for more than $85.5\%$ of all ethereal Bianchi modular forms with coefficients in $\F_p$ and $\norm(\mf{n}) \le 1000$.
Based on these computations, we make the following optimistic prediction:

\begin{conjecture}[Conjecture \ref{conj:existence}]
  Any ethereal mod $p$ Bianchi modular form over a norm-Euclidean imaginary quadratic field $K$ admits a higher-level characteristic $0$ lift. 
\end{conjecture}

\begin{remark}
  One can reformulate Conjecture \ref{conj:existence} as a statement about global deformation rings attaining characteristic $0$ points after adding sufficient local conditions.
  In our situation, the expected dimension of the corresponding deformation space equals $0$, but since the spaces tend to be highly singular, it is difficult to obtain concrete results.
  For further details on this point of view, see \cite{calegari2009}.
\end{remark}

While liftability with $\F_p$-coefficients might not be surprising due to the connection to elliptic curves, we also find many examples of eigenvalue systems over $\overline{\F}_{p}$ which lift to characteristic $0$.

\begin{example}
  Let $\mf{n} \subset \Z[i]$ be the ideal $(2 + 17i)$ (label \texttt{293.1}; see Remark \ref{rmk:conventions}). We compute that $p=3$ is an ethereal characteristic for $\Gamma_0(\mf{n})$, and find an ethereal eigenvalue system $f$ defined over $\F_9$. 
  While $f$ does not lift to characteristic $0$ at level $\mf{n}$, we find a characteristic zero eigenvalue system $F$ at level $\mf{n r}$, with $\mf{r} = (2-i)$ (label \texttt{5.2}), congruent to $f$ mod $(3)$  for all primes $\p \nmid 3\mf{nr}$ up to norm $100$. The field generated by its Hecke eigenvalues is $E = \Q(\sqrt{5})$, in which $3$ is inert. Choosing $x^{2} + x -1$ as our minimal polynomial and letting $w$ be a root in $E$ with $\bar{w} \defeq w \Mod {3\mathcal{O}_E}$, we list the first few eigenvalues in Table \ref{table:irrational-lift-irrational-evals}.
  \begin{table}[H]
\centerline{
    \begin{tabular}{lcccccccccc}
      $\p$ & \texttt{2.1} & \texttt{5.1} & \texttt{9.1} & \texttt{13.1} & \texttt{13.2} & \texttt{17.1} & \texttt{17.2} & \texttt{29.1} & \texttt{29.2} & \texttt{37.1} \\
      \midrule
      $a_{\p}(f)$ & $2\bar{w} + 2$ & $2$ & $\bar{w}$ & $2\bar{w}$ & $2\bar{w}$ & $\bar{w} + 2$ & $2\bar{w}$ & $2\bar{w} + 1$ & $\bar{w} + 1$ & $2\bar{w} + 2$\\
      $a_{\p}(F) \Mod{3}$ & $2\bar{w} + 2$ & $2$ & $\bar{w}$ & $2\bar{w}$ & $2\bar{w}$ & $\bar{w} + 2$ & $2\bar{w}$ & $2\bar{w} + 1$ & $\bar{w} + 1$ & $2\bar{w} + 2$\\
      $a_{\p}(F)$ & $-w - 1$ & $-1$ & $w - 3$ & $2w - 3$ & $-4w - 3$ & $4w - 1$ & $2w$ & $-w - 2$ & $-5w - 5$ & $2w + 5$\\
    \end{tabular}
    }
    \caption{An ethereal mod $3$ form with Hecke eigenvalues in $\F_9$ which lifts at higher level.}\label{table:irrational-lift-irrational-evals}
  \end{table}
\end{example}

\begin{remark}\label{rmk:conventions}
	Throughout we use the LMFDB ideal labelling convention for ideals in our imaginary quadratic fields $K$, as described in \cite{cremona2020}, where \texttt{A.B} means the $B$-th ideal of norm $A$ with respect to a suitable order depending only on the minimal polynomial of $K$. In the LMFDB, the minimal polynomial chosen is either $x^2+d$ or $x^2 -x + \frac{d+1}{4}$, depending on the value of $d$ mod 4, and we follow this convention. 
\end{remark}

We also find examples of Hecke eigenvalue systems valued in $\F_p$ where the smallest level lift to characteristic $0$ is defined over an extension of $\Q$ of higher degree.

\begin{example}
  Let $K = \Q(\sqrt{-3})$, let $\omega = \frac{1 + \sqrt{-3}}{2}$, and fix $\mf{n} = (23+3\omega)$ (label \texttt{607.1}).
  We find an associated mod $31$ ethereal Bianchi form $f$ of level $\Gamma_0(\mathfrak{n})$ which has no lifts in the LMFDB.
  However, by adding $\mf{r} = (\omega-4)$ (label \texttt{13.2}) to the level, we find a characteristic $0$ lift $F$ with eigenvalues in the field $\Q(\sqrt{2})$.
  The eigenvalues of $F$ modulo the prime $\mf{q} = (4\sqrt{2}+1)$ of norm $31$ coincide with the eigenvalues of $f$, as Table \ref{table:irrational-lift-rational-evals} indicates.
  \begin{table}[H]
    \centerline{
\begin{tabular}{lcccccccccc}
  $\p$ & \texttt{3.1} & \texttt{4.1} & \texttt{7.1} & \texttt{7.2} & \texttt{13.1} & \texttt{19.1} & \texttt{19.2} & \texttt{25.1} & \texttt{31.1} & \texttt{31.2} \\
  \midrule
$a_{\p}(f)$ & $16$ & $9$ & $4$ & $13$ & $2$ & $3$ & $22$ & $22$ & $14$ & $8$ \\
$a_{\p}(F) \Mod {\mf{q}}$ & $16$ & $9$ & $4$ & $13$ & $2$ & $3$ & $22$ & $22$ & $14$ & $8$  \\
$a_{\p}(F)$ & $-2\sqrt{2}$ & $-\sqrt{2}+1$ & $4$ & $2\sqrt{2} - 2$ & $2$ & $3$ &  $\sqrt{2}-1$ & $\sqrt{2} - 1$  & $6\sqrt{2}$ & $3\sqrt{2} + 1$\\
\end{tabular}}
    \caption{An ethereal mod $31$ form with $\F_{31}$-eigenvalues lifting at higher level to a Bianchi modular form with irrational coefficients.}\label{table:irrational-lift-rational-evals}
  \end{table}
\end{example}
Several more examples are presented in Section \ref{sec:examples},
including one showing failure of multiplicity $1$ in mod $p$ cohomology when the level is highly divisible by $p$, see Example \ref{eg:mult-2}.

The cohomology groups $H^{1}(\Gamma_0(\mf{n}),\overline{\F}_{p})$ admit natural degeneracy maps to higher levels, which are made relatively explicit in Section \ref{sec:degeneracy-maps}.
To find lifts, we implement a two-stage strategy: first we find likely candidates by computing Hecke eigenvalue systems in characteristic $0$ at higher level.
Second, once we find a likely congruence, we compute the mod $p$ reduction map and level-raising maps to verify that the candidate indeed lifts the cohomology class unconditionally.

\subsection*{Outline}
In Section \ref{sec:theory}, we collect some theoretical results we need.
We begin with a precise definition of ethereal Bianchi modular forms in Section \ref{sec:ethereal-bmfs}, then prove lifting via elliptic curves in Section \ref{sec:ell-curve-lifting}.
Then in Section \ref{sec:computational-aspects}, we describe how to compute Bianchi modular forms using cohomology, which we implement in \texttt{Magma} \cite{bosma1997}. 
We also derive an explicit formula for degeneracy maps to higher level in Section \ref{sec:degeneracy-maps}, which we use to verify lifts.
Finally, in Section \ref{sec:results} we give details on the methods for, and results of, our extensive computations.

\subsection*{Acknowledgments}
We thank Christian Johansson, James Newton and John Voight for constructive discussions and many helpful suggestions.
Combes is supported by the Australian Research Council Laureate Fellowship FL230100256 grant of Geordie Williamson, and gratefully acknowledges the support of the Sydney Mathematics Research Institute.
Damm-Johnsen was supported by a postdoctoral fellowship from the Knut and Alice Wallenberg Foundation, project number KAW 2023.0427.

\subsection*{AI statement}
The code used to compute our data is virtually all human-written, with AI tools used to propose minor changes that were human-reviewed.
These tools were also used to discover potential bugs, perform quick experiments, brainstorm, and to proofread the manuscript.

\section{Theoretical aspects}\label{sec:theory}
\subsection{Ethereal Bianchi modular forms}\label{sec:ethereal-bmfs}

Let $K$ be an imaginary quadratic field of class number $1$, and define
\begin{equation}
\Gamma_{0}(\mf{n}) \defeq \bigg\{\begin{pmatrix}
  * & * \\ c & * 
\end{pmatrix}
 \in \GL_2(\OO_K) \mid  c \in \mf{n}
\bigg\},
\end{equation}
for some integral ideal $\mf{n} \subset \mathcal{O}_{K}$. 

Similarly to weight $1$ classical modular forms, see \cite[Section 8]{schaeffer2012}, Bianchi groups admit primes $p$ for which the reduction map
\begin{equation}
  \label{eq:16}
  H^{1}(\Gamma_0(\mf{n}),\Z) \otimes \F_{p} \xrightarrow{\red_{p}} H^{1}(\Gamma_0(\mf{n}), \F_{p})
\end{equation}
is not surjective. Following Schaeffer, we call such primes \emph{ethereal characteristics}. 

For a congruence subgroup $\Gamma_{1}(N) \subset \SL_{2}(\Z)$, ethereal characteristics are detected by $p$-torsion in $H^{1}(X_{1}(N), \underline{\omega})$, whereas in our setting, the cohomological degree is shifted by $1$. Indeed, it is well-known that the short exact sequence
\begin{equation}
  0 \to \Z \to \Z \to \F_{p} \to 0
\end{equation}
induces a long exact sequence which splices to
\begin{equation}
  \label{eq:17}
  0 \to H^{1}(\Gamma_0(\mf{n}),\Z) \otimes \F_{p} \xrightarrow{\red_{p}} H^{1}(\Gamma_0(\mf{n}), \F_{p}) \to H^{2}(\Gamma_0(\mf{n}),\Z)[p] \to 0.
\end{equation}

There is a natural action of $\Gamma_0(\mf{n})$ on $\P^{1}(K)$, and for $r \in \P^{1}(K)$ we let $\Gamma_0(\mf{n})_{r}$ denote the stabiliser of $r$ in $\Gamma_0(\mf{n})$.
For any $\Z[\Gamma_0(\mf{n})]$-module $M$, let
\begin{equation}
  \label{eq:18}
  H^{i}_{\parab} (\Gamma_0(\mf{n}), M) \defeq \ker \Big( H^{i}(\Gamma_0(\mf{n}),M) \to \prod_{r \in \Gamma_0(\mf{n}) \backslash \P^{1}(K)} H^{i}(\Gamma_0(\mf{n})_{r}, M)\Big)
\end{equation}
denote the group of \emph{parabolic Bianchi classes}.
When $M = \C$, these correspond to cuspidal Bianchi modular forms under the generalised Eichler--Shimura isomorphisms,
\begin{equation}
  \label{eq:20}
  H^{i}_{\parab}(\Gamma_0(\mf{n}), \C) \cong S_{2}(\Gamma_0(\mf{n})), \quad i = 1,2,
\end{equation}
see for example~\cite[Section 5.1]{ghate1999}.
Notice that the reduction map $\red_{p}$ restricts to a map
\begin{equation}
  \label{eq:21}
H^{1}_{\parab}(\Gamma_0(\mf{n}), \Z)\otimes \F_{p} \to H^{1}_{\parab}(\Gamma_0(\mf{n}), \F_{p}).
\end{equation}

\begin{definition}\label{def:liftable-class}
  We say that a class $f \in H^{1}_{\parab}(\Gamma_0(\mf{n}), \F_{p})$ is \emph{liftable} if it is in the image of $\red_{p}$.
\end{definition}

The following lemma is most likely standard, but for lack of a convenient reference, we give a quick proof.
\begin{lemma}
  Suppose $M$ is a $\Z[\Gamma_0(\mf{n})]$-module which is free of finite rank over $\Z$, and let $M^{\vee} \defeq \Hom_\Z(M, \Z)$.
  Then $H^{j}(\Gamma_0(\mf{n}), M^{\vee})_\tors \cong H_{j-1} {(\Gamma_0(\mf{n}), M)}_\tors$ for any $j \ge 1$.
\end{lemma}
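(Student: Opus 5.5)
We work with a free resolution of $\Z$ over $\Z[\Gamma]$, $\Gamma = \Gamma_0(\mf{n})$, whose terms are finitely generated free modules. Such a resolution exists because $\Gamma$ is a finite-index subgroup of $\PGL_2(\OO_K)$, hence of type $FP_\infty$. Call it $P_\bullet \to \Z$. Set $C_\bullet \defeq P_\bullet \otimes_{\Z[\Gamma]} M$, a complex of finitely generated free $\Z$-modules computing $H_\bullet(\Gamma, M)$.

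The first step is to identify the cochain complex computing $H^\bullet(\Gamma, M^\vee)$ with the $\Z$-dual of $C_\bullet$. The adjunction between $\Hom$ and $\otimes$ gives a natural isomorphism
\[
\Hom_{\Z[\Gamma]}(P_j, \Hom_\Z(M,\Z)) \cong \Hom_\Z(P_j \otimes_{\Z[\Gamma]} M, \Z),
\]
where $M$ is made a right module via $m\gamma = \gamma^{-1}m$, matching the contragredient action on $M^\vee$. This isomorphism is compatible with the differentials, so $H^j(\Gamma, M^\vee) \cong H^j(\Hom_\Z(C_\bullet, \Z))$. The only care needed is to check that the left/right action conventions line up; this is a routine verification.

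The second step is the universal coefficient theorem for the complex $C_\bullet$ of free abelian groups:
\[
0 \to \Ext^1_\Z(H_{j-1}(C), \Z) \to H^j(\Hom(C,\Z)) \to \Hom_\Z(H_j(C), \Z) \to 0 .
\]
The right-hand term is torsion-free. Since $C_j$ is finitely generated and $\Z$ is Noetherian, each $H_j(C)$ is a finitely generated abelian group. Writing $H_{j-1}(C) \cong \Z^r \oplus T$ with $T$ finite, we get $\Ext^1(H_{j-1}(C),\Z) \cong \Ext^1(T,\Z) \cong \Hom(T, \Q/\Z) \cong T$, the last isomorphism being non-canonical. Because the quotient is torsion-free, the torsion of the middle term is exactly the image of the $\Ext$ term. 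Hence $H^j(\Gamma, M^\vee)_\tors \cong T = H_{j-1}(\Gamma, M)_\tors$ for $j \ge 1$.

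The main point needing justification is finiteness: we need finitely generated free modules in the resolution, or at least finitely generated homology, so that $\Ext^1(H_{j-1},\Z)$ is finite and isomorphic to the torsion. We get this from the fact that $\Gamma$ acts properly and cocompactly on the Borel–Serre bordification of hyperbolic $3$-space, which has only finitely many cells modulo $\Gamma$. One caveat: $\Gamma$ has torsion, so the cellular chain complex of that space is not a free resolution of $\Z$. Finiteness of the type $FP_\infty$ resolution still holds, since a torsion-free finite-index subgroup has a finite classifying space, and type $FP_\infty$ passes to finite-index overgroups. Everything else is formal.
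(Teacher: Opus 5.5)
Your proposal is correct and follows the paper's proof. Both arguments use the tensor--hom adjunction to identify $\Hom_\Z(C_\bullet,\Z)$ with the cochain complex computing $H^\bullet(\Gamma_0(\mf{n}), M^\vee)$, then apply the universal coefficient theorem together with finite generation of $H_{j-1}$ to identify the torsion with $\Ext^1_\Z(H_{j-1},\Z) \cong H_{j-1}(\Gamma_0(\mf{n}),M)_\tors$. The differences are cosmetic: you use torsion-freeness of the quotient where the paper uses the splitting, and you justify finite generation (type $FP_\infty$ via Borel--Serre and a torsion-free finite-index subgroup) where the paper simply asserts it for Bianchi groups.
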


\begin{proof}
  Choose a free resolution $P_\bullet \to \Z$ of $\Z[\Gamma_0(\mf{n})]$-modules, where $\Z$ has the trivial action.
  Then the homology of the complex $C_\bullet \defeq P_\bullet \otimes_{\Z[\Gamma_0(\mf{n})]} M$ is $H_\bullet (\Gamma_0(\mf{n}), M)$.
  Notice that $C_j$ is free over $\Z$ since $P_j$ is free over $\Z[\Gamma_0(\mf{n})]$ and $M$ is free over $\Z$.
  It follows that $d(C_j)$ is a free submodule of $C_{j-1}$, being a submodule of a free module over a PID.
  We may then apply the universal coefficient theorem~\cite[3.6.5]{weibel1994}, giving a split exact sequence
  \begin{equation}
    0 \to \Ext^{1}_\Z (H_{j-1}(C_\bullet), \Z) \to H^{j}(\Hom_\Z(C_\bullet, \Z)) \to \Hom_\Z (H_j(C_\bullet), \Z) \to 0.
  \end{equation}
  By the tensor-hom adjunction, we have an isomorphism of complexes
  \begin{equation}
    \label{eq:26}
    \Hom_\Z(C_{\bullet}, \Z)  \cong \Hom_{\Z[\Gamma_0(\mf{n})]}(P_\bullet, M^{\vee}),
  \end{equation}
  so the middle term computes $H^{j}(\Gamma_0(\mf{n}), M^{\vee})$.
  For the term on the left, note that $H_{j-1}(\Gamma_0(\mf{n}), M)$ is finitely generated since $\Gamma_0(\mf{n})$ is a Bianchi group, so a standard computation yields that $\Ext^{1}_\Z(H_{j-1}(\Gamma_0(\mf{n}), M), \Z) \cong H_{j-1}(\Gamma_0(\mf{n}), M)_\tors$.
  On the other hand, $\Hom_\Z (H_j(C_\bullet), \Z)$ is torsion-free.
  Since the sequence is split, the result follows.
\end{proof}
In particular, when $M = \Z \cong M^{\vee}$ and $j=2$, we infer:

\begin{corollary}
  Suppose $H_{1}(\Gamma_0(\mf{n}),\Z)[p] \ne 0$. Then $p$ is an ethereal characteristic of $\Gamma_0(\mf{n})$. 
\end{corollary}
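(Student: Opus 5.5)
We combine the preceding lemma, applied with $M = \Z$ and $j = 2$, with the short exact sequence \eqref{eq:17}. The trivial module $\Z$ is free of rank one over $\Z$, and $M^{\vee} = \Hom_\Z(\Z,\Z) \cong \Z$ as $\Z[\Gamma_0(\mf{n})]$-modules, since the action on the dual is again trivial. The lemma therefore gives an isomorphism
\begin{equation*}
  H^{2}(\Gamma_0(\mf{n}), \Z)_\tors \cong H_{1}(\Gamma_0(\mf{n}), \Z)_\tors .
\end{equation*}

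Next we pass to $p$-torsion. Every $p$-torsion element lies in the torsion subgroup, so $A[p] = (A_\tors)[p]$ for any abelian group $A$. Taking $p$-torsion on both sides of the isomorphism above yields $H^{2}(\Gamma_0(\mf{n}),\Z)[p] \cong H_{1}(\Gamma_0(\mf{n}),\Z)[p]$. By hypothesis the right-hand side is nonzero, hence so is $H^{2}(\Gamma_0(\mf{n}),\Z)[p]$.

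Finally, in the exact sequence \eqref{eq:17}, the cokernel of
\begin{equation*}
  \red_p \colon H^{1}(\Gamma_0(\mf{n}),\Z)\otimes\F_p \to H^{1}(\Gamma_0(\mf{n}),\F_p)
\end{equation*}
is identified with $H^{2}(\Gamma_0(\mf{n}),\Z)[p]$, which we have just shown is nonzero. So $\red_p$ is not surjective, which is exactly the definition of $p$ being an ethereal characteristic.

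There is no real obstacle here. The only point that needs a word of care is that the lemma's isomorphism is one of abstract groups, and restricting it to $p$-torsion is legitimate only because $p$-torsion automatically lies inside the torsion subgroup. The identification of the cokernel of $\red_p$ is already recorded in \eqref{eq:17}.
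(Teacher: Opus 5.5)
Your proof is correct and is exactly the paper's argument: specialise the lemma to $M=\Z$, $j=2$ to get $H^{2}(\Gamma_0(\mf{n}),\Z)_\tors \cong H_{1}(\Gamma_0(\mf{n}),\Z)_\tors$. Then observe that a nonzero $H^{2}(\Gamma_0(\mf{n}),\Z)[p]$ is precisely the cokernel of $\red_p$ in \eqref{eq:17}, so $\red_p$ is not surjective.
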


\begin{remark}
  This translates to a simple method of finding ethereal characteristics for $\Gamma_0(\mf{n})$: given a presentation of $\PSL_{2}(\OO_{K})$, one may find a presentation of $\Gamma_{0}(\mf{n})$ through the action on $\P^{1}(\OO_{K}/ \mathfrak{n})$.
  Then the primary decomposition $H_{1}(\Gamma, \Z) \cong \Gamma/ \Gamma'$ can be computed efficiently using a computer algebra system, and from this one can read off the primes $p$ for which $H_{1}(\Gamma_{0}(\mf{n}),\Z)[p] \ne 0$.
\end{remark}

\begin{example}\label{example:large_prime_ethereal}
It is well-known that the torsion in $H_1$ of Bianchi congruence subgroups can be very large.
For example, for $K = \Q(\sqrt{-7})$, our algorithms detect that $1481759$ is an ethereal characteristic for $\Gamma_0(\mf{n})$ where $\mf{n}$ is the ideal labelled \texttt{547.2}.
\end{example}

For $R = \Z$, $\C$ or $\F_p$, the cohomology group $H^{1}(\Gamma_0(\mf{n}), R)$ admits a natural action of Hecke operators $T_{\mf{p}}$ indexed by primes $\mf{p}$ of $\OO_K$ coprime to $\mf{n}$.
In the case of $R = \F_p$, these will be described in further detail in Section \ref{sec:hecke-operators}.
An eigenvector $f$ for this action is referred to as a Bianchi eigenform, and the finite extension of $\operatorname{Frac} R$ generated by the eigenvalues is called its Hecke field, denoted $\Q(f)$.

\begin{definition}\label{def:liftable}
  Let $f$ be an ethereal mod $p$ eigenform. We say that \emph{$f$ admits a characteristic zero lift of higher level}, or \emph{is liftable at higher level} for short, if there exists an integral ideal $\mf{r} \subset \OO_K$ and a Bianchi eigenform $F$ in $H^{1}_{\parab}(\Gamma_0(\mf{nr}), \Z)\otimes \overline{\Q}$ such that for some prime $\mf{p}$ in $\Q(F)$ above $p$, we have
  \begin{equation}
    a_{\mf{q}}(F) \equiv a_{\mf{q}}(f) \Mod{\mf{p}},
  \end{equation}
  for all primes $\mf{q} \subset \OO_K$ coprime to $\mf{nr}$ and $p$.
\end{definition}
There is a subtle difference between liftable classes (Definition \ref{def:liftable-class}) and mod $p$ eigenvalue systems admitting lifts at higher levels: an eigenvalue system can lift without the corresponding cohomology class lifting.
This phenomenon occurs in Example \ref{eg:mult-2}.

\subsection{Eisenstein classes}
\label{sec:eisenstein-classes}
In characteristic $0$, the complement of $H^{1}_{\parab}(\Gamma_0(\mf{n}),\C)$ in $H^{1}(\Gamma_0(\mf{n}), \C)$ is computed by Eisenstein classes, which can be constructed explicitly.
Over $\F_{p}$, however, there is no analogous construction of non-parabolic classes.
In lieu of this, we follow \cite{calegari2012a} and define Eisenstein systems in terms of the associated Galois representation.

\begin{definition}
  Let $f$ be a mod $p$ Bianchi eigenform in $H^{1}(\Gamma_0(\mf{n}), \overline{\F}_{p})$.
  We say $f$ is \emph{Eisenstein} if there exist Hecke characters $\chi_{1},\chi_{2} : \operatorname{Cl}_{\mf{m}} \OO_{K} \to \overline{\F}_{p}^{\times}$ of some modulus $\mf{m}$ such that
  \begin{equation}
    \label{eq:22}
    a_{\mf{p}} (f) =  \chi_1(\mf{p}) + \chi_2 (\mf{p})
  \end{equation}
  for all but finitely many $\mf{p} \subset \OO_K$. Further, we say $f$ is \textit{cyclotomic-Eisenstein} if we have 
  \begin{equation*}
  	a_{\p}(f) = 1 + \norm(\p)
  \end{equation*}
  for all but finitely many $\p \subset \OO_K$. 
\end{definition}
These correspond to definitions (D0) and (D1) in \cite[Section 3.8]{calegari2012a}.
Eisenstein eigenvalue systems automatically lift to characteristic $0$;  the characters $\chi_{1}$ and $ \chi_{2}$ can be lifted to $\C$, and vectors in the automorphic representation $\operatorname{Ind}(\chi_1 \oplus \chi_2)$ correspond in some natural sense to the Galois representation $\chi_{1} \oplus \chi_{2}$, though they may not be cohomological.

\begin{remark}
  Cyclotomic-Eisenstein systems appear often in $H^1(\Gamma_0(\n),\overline{\F}_p)$, as well as being the easiest Eisenstein systems to identify. For these reasons, we exclude them from our data and analysis in Section \ref{sec:results}. 
\end{remark}

\subsection{Lifts from elliptic curves}\label{sec:ell-curve-lifting}
For small primes $p$, it is possible to use Galois theoretic methods to ensure existence of characteristic $0$ lifts.
The next proposition expands on the remark in~\cite[Section 9.3.2]{calegari2012a} following Question 9.3.3.

\begin{proposition}\label{prop:small-prime-liftability}
  Let $p \in \{2,3,5\}$, and let $f$ be a mod $p$ Bianchi eigenform over $K$ with Hecke eigenvalues in $\F_{p}$, with associated Galois representation $\rho$.
  Suppose the rank of $X_{0}(15)(K)$ is $0$.
  Then $f$ is liftable.
\end{proposition}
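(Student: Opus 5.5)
Starting from $f$, I would pass to its Galois representation $\overline{\rho} = \overline{\rho}_f : \Gal(\overline{K}/K) \to \GL_2(\F_p)$ and realise it as the $p$-torsion of an elliptic curve. Since the eigenvalues lie in $\F_p$, the results cited in the introduction (\cite{scholze2015}) give $\overline{\rho}$ valued in $\GL_2(\F_p)$ after semisimplification, with $\tr \overline{\rho}(\Frob_\lambda) = a_\lambda(f)$ for almost all $\lambda$. As in the introduction's version of the statement, I would use (or reduce to the case) $\det\overline{\rho} = \overline{\chi}_{\mathrm{cyc}}$. If $\overline{\rho}$ is reducible, then $f$ is Eisenstein in the sense of Section~\ref{sec:eisenstein-classes}, and I would treat that case separately by lifting the two characters. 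So assume $\overline{\rho}$ is absolutely irreducible.

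The key step is to find an elliptic curve $E/K$ with $E[p] \cong \overline{\rho}$ and with $E$ modular. For $p \in \{2,3,5\}$, the twisted modular curve $X_{\overline{\rho}}(p)$ parametrising pairs $(E, E[p]\cong\overline{\rho})$ compatible with the Weil pairing has genus $0$. It has a $K$-point coming from the cusp structure or, for $p=2$, directly from a cubic $x^3+ax+b$ with the right splitting field. Hence it is $\P^1_K$ and has infinitely many $K$-points, giving infinitely many curves $E_t/K$ with $E_t[p]\cong\overline{\rho}$. This is where $\det=\chi_{\mathrm{cyc}}$ is used. To apply Caraiani--Newton \cite{caraiani2023}, I would recall that their modularity theorem holds for all $E/K$ provided $X_0(15)(K)$ has rank $0$ (their hypothesis controlling the $3$--$5$ switch). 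So every such $E_t$ is modular: there is a weight-$2$ Bianchi cusp form $F$ of level $\Gamma_0(\mf{N}_E)$, with $\mf{N}_E$ the conductor, and rational eigenvalues $a_\lambda(F) = \norm(\lambda)+1-\#E(\OO_K/\lambda)$.

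Then I would compare eigenvalues. For $\lambda \nmid p\mf{N}_E\mf{n}$ we have $a_\lambda(F) = \tr\rho_{E,p}(\Frob_\lambda) \equiv \tr\overline{\rho}(\Frob_\lambda) = a_\lambda(f) \pmod p$, with finitely many exceptions from the Scholze local-global compatibility. Enlarging the exceptional set is harmless, because I would choose $\mf{r}$ with $\mf{n r}$ divisible by $\mf{N}_E$. The class $F$ is then pulled back to level $\Gamma_0(\mf{nr})$ via the degeneracy maps, which preserves the $T_\lambda$-eigenvalues for $\lambda\nmid \mf{nr}$. The remaining finitely many primes can be absorbed into $\mf{r}$, which gives Definition~\ref{def:liftable}.

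I expect the main obstacle to be the existence of the $E_t$ in the nonsurjective and exceptional cases, namely the rationality of $X_{\overline{\rho}}(p)$ over $K$ and the avoidance of CM or degenerate fibres. I would handle this first by noting that a genus-$0$ curve with a rational point has Zariski-dense points, so I can choose $t$ avoiding finitely many bad values. A second concern is checking that Caraiani--Newton's hypotheses (e.g. on $K$) are exactly the stated rank-$0$ condition on $X_0(15)(K)$; I would quote their main theorem in that form.
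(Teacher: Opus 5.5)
Your strategy matches the paper's. You realise $\overline{\rho}$ as $E[p]$ for some $E/K$, via a cubic when $p=2$ and via the twisted modular curve $X_{\overline{\rho}}(p)$ when $p=3,5$, and then apply Caraiani--Newton. The $p=2$ case and the final level bookkeeping are fine.

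\textbf{The gap.} The step that carries all the weight for $p=3,5$ is not justified. You assert that $X_{\overline{\rho}}(p)$ has a $K$-point ``coming from the cusp structure'' and hence is $\P^1_K$.
\begin{itemize}
\item Genus $0$ only tells you that $X_{\overline{\rho}}(p)$ is a conic over $K$. Conics over number fields can have no rational points.
\item The cusps do not help. They are indexed by nonzero vectors of $\F_p^2$ up to sign, with $G_K$ acting through $\overline{\chi}_{\mathrm{cyc}}^{-1}\overline{\rho}$. So a $K$-rational cusp exists only if $\overline{\rho}$ stabilises a line, which is exactly the reducible case you excluded.
\item Your closing remark, that a genus-$0$ curve with a rational point has Zariski-dense points, is then circular: it presupposes the point you have not produced.
\end{itemize}
The missing ingredient is the theorem of Shepherd-Barron--Taylor \cite[Theorem 2]{shepherd-barron1997}, which the paper invokes. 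For $p=3,5$ and $\det\overline{\rho}=\overline{\chi}_{\mathrm{cyc}}$, it gives $X_{\overline{\rho}}(p)\cong\P^1_K$. This is a genuine arithmetic input, not a formal consequence of having genus $0$.

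\textbf{A smaller point.} In the reducible case, lifting $\chi_1,\chi_2$ produces Eisenstein classes. These are not the parabolic class required by Definition \ref{def:liftable}, and, as the paper notes, they need not even be cohomological. To the best of my recollection, Shepherd-Barron--Taylor's result needs only the determinant condition, not irreducibility. If so, the reducible case goes through the same elliptic-curve argument. Once $X_{\overline{\rho}}(p)\cong\P^1_K$ is known, choose $E$ without CM by $K$, avoiding finitely many $j$-invariants as you suggest. Then the associated Bianchi form is cuspidal.
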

\begin{proof}
  The strategy is to apply the recent result of Caraiani--Newton \cite{caraiani2023}, which states that elliptic curves over $K$ where $X_{0}(15)(K)$ has rank $0$ are modular.
  If we can find an elliptic curve $E /K$ satisfying $E[p] \simeq \rho$, then by modularity there exists a characteristic $0$ Bianchi modular form $F$ such that $\overline{\rho}_{F} \simeq \rho$.
  When $p$ is $2$, this is quite explicit.
  The associated mod $2$ Galois representation $\rho$ has image contained in $\GL_2(\F_2) \simeq S_{3}$.
  Note that $\rho$ factors through a finite extension $L = \overline{K}^{\ker(\rho)}$ with $\Gal(L/K) \simeq \im(\rho)$.
  If $g \in K[x]$ is a separable cubic polynomial whose splitting field is $L$, then the elliptic curve $E_{g} : y^{2} = g(x)$ satisfies $E[2] \simeq \rho$.

  For the remaining two primes, the existence of such elliptic curves is ensured by \cite[Theorem 2]{shepherd-barron1997}. The argument boils down to noting that such elliptic curves are
  classified by $K$-points of the twisted modular curves $X_{\rho}(p)$, and for $p=3,5$, these curves are isomorphic to $\mathbb{P}^{1}_{K}$. 
\end{proof}
\begin{remark}
  Out of the norm-Euclidean imaginary quadratic fields, the condition on $K = \Q(\sqrt{-d})$ holds for $d \in \{1,2,3\}$, but not for $d \in \{7,11\}$.
  For individual elliptic curves over these fields, it is possible to check modularity using the Faltings--Serre--Livné method.
  This is recorded in the LMFDB \cite{lmfdb}.
\end{remark}
\begin{remark}\label{rmk:dieulefait}
  It would be surprising if this argument could be extended to primes $p > 5$.
  Dieulefait shows in \cite{dieulefait2004} that for any such $p$ there exists a mod $p$ Galois representation of $G_{\Q}$, satisfying similar hypotheses as in Proposition \ref{prop:small-prime-liftability}, which does not arise as the $p$-torsion of a rational elliptic curve.
\end{remark}

In fact, many $\F_p$-valued eigenvalue systems are precluded from having lifts with rational eigenvalues, as a consequence of the recently resolved Ramanujan conjecture for Bianchi modular forms.

\begin{proposition}[Ramanujan bound {\cite[Theorem A]{boxer2025}}]\label{prop:blocking}
  If $F \in H^{1}_{\parab}(\Gamma_0(\mf{nr}), \Q)$ is a characteristic $0$ Bianchi modular form of level $\Gamma_0(\mf{nr})$ with eigenvalues in $\Q$, then for any $\p$ coprime to $\mf{nr}$,  $\abs{a_{\p}(F)} \le 2 \sqrt{\norm(\p)}$. 
\end{proposition}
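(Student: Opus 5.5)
This is a citation of \cite[Theorem A]{boxer2025}, so the plan is to deduce the stated bound from that result rather than reprove Ramanujan. The steps are: pass from $F$ to a cuspidal automorphic representation, apply the temperedness theorem, and then use that the eigenvalues are rational.

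First, I will use the Eichler--Shimura isomorphism \eqref{eq:20} to view $F \otimes \C$ as a cuspidal Bianchi newform, or an oldform coming from one, of parallel weight $2$. It therefore generates a cuspidal automorphic representation $\pi$ of $\GL_2(\mathbb{A}_K)$ that is cohomological with trivial coefficients. For $\p \nmid \mf{nr}$, the local component $\pi_\p$ is unramified. The $T_\p$-eigenvalue $a_\p(F)$ equals, up to the standard normalisation $\norm(\p)^{1/2}$, the trace $\alpha_\p + \beta_\p$ of the Satake parameters of $\pi_\p$.

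Second, I will invoke \cite[Theorem A]{boxer2025}, which says that $\pi_\p$ is tempered at such $\p$, i.e. $|\alpha_\p| = |\beta_\p| = 1$ in the unitary normalisation. The triangle inequality then gives, for every complex embedding,
\[
|a_\p(F)| = \norm(\p)^{1/2}\,|\alpha_\p + \beta_\p| \le 2\sqrt{\norm(\p)}.
\]
Since $a_\p(F)\in\Q$ by hypothesis, this is the stated bound.

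The main obstacle is bookkeeping. I need to check that the Hecke normalisation of $T_\p$ used in the paper matches the unitary normalisation, with no extra factor from the $\PGL_2$ versus $\GL_2$ convention. I also need to check that the trivial central character hypotheses of \cite{boxer2025} apply to $\Gamma_0$ level. If $F$ is not new, I will first replace it by the newform with the same eigenvalues away from $\mf{nr}$.
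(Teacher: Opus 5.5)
Your proposal is correct and matches the paper, which gives no argument beyond citing \cite[Theorem A]{boxer2025}. Your sketch is the standard unpacking of that citation: Eichler--Shimura gives a regular algebraic cuspidal $\pi$ of $\GL_2(\mathbb{A}_K)$, $\pi_\p$ is tempered at unramified $\p$, and the triangle inequality is applied to the unitary Satake parameters scaled by $\norm(\p)^{1/2}$, with rationality of $a_\p(F)$ only making $\abs{a_\p(F)}$ independent of the embedding.
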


The following proposition shows that not only the Hecke eigenvalues, but also the reduction types
of elliptic curves corresponding to higher-level lifts of ethereal Bianchi forms are constrained.

\begin{proposition}\label{prop:ethereal-ellcurve-steinberg}
  Suppose $f$ is an ethereal mod $p$ Bianchi modular form of level $\Gamma_0(\mf{n})$, and that $F$ is a characteristic $0$ lift of level $\mf{nr}$ associated to an elliptic curve $E$ over $K$.
  If $\mf{q} \mid \mf{r}$ is a divisor such that $(\mf{q}, \mf{n}p) =1$ and $F$ is new at $\mf{q}$, then one of the following two statements is true.
  \begin{enumerate}
  \item We have $p=2$, the curve $E$ is a ramified quadratic twist of an elliptic curve with good or multiplicative reduction at $\mf{q}$, and $\mf{q}$ divides $\mf{r}$ exactly twice.     
  \item The curve $E$ has multiplicative reduction at $\mf{q}$, and $\mf{q}$ divides $\mf{r}$ exactly once. Furthermore,
  \begin{equation}
    a_{\mf{q}}(f) \equiv \pm (1 + \norm(\mf{q})) \Mod{p}.
  \end{equation}
  \end{enumerate} 
\end{proposition}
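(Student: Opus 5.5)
The plan is to analyse the local Galois representation $\rho_{E,p}|_{G_{K_{\mf q}}}$ at $\mf q$ and compare it with the mod $p$ representation $\overline{\rho}_f$. The comparison rests on one observation: $f$ has level $\mf n$ with $\mf q\nmid \mf n p$, so $\overline{\rho}_f \simeq E[p]$ is unramified at $\mf q$. Meanwhile $F$ is new at $\mf q$, so the conductor exponent of $E$ at $\mf q$ equals $v_{\mf q}(\mf{nr}) = v_{\mf q}(\mf r)\ge 1$, and $E$ has bad reduction at $\mf q$. The question is then which bad reduction types allow $E[p]$ to be unramified at $\mf q \nmid p$. I will organise this by the Néron--Ogg--Shafarevich criterion and the Kodaira--Néron classification, splitting into potentially multiplicative and potentially good reduction.

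\emph{Potentially multiplicative case.} Here $E$ is a quadratic twist, by a character $\psi$ of $G_{K_{\mf q}}$ with $\psi^2=1$, of a Tate curve $E_q$. By Tate uniformisation, $E_q[p]$ is an extension of $\F_p$ by $\F_p(1)$, and it is ramified at $\mf q$ unless $p \mid v_{\mf q}(q)$. If $\psi$ is unramified, $E$ has multiplicative reduction and conductor exponent $1$, giving the divisibility claim of (2). Since $\rho_f$ is unramified at $\mf q$, the semisimplification of $E[p]$ is $\psi\oplus\psi\chi_{\mathrm{cyc}}$, and $\psi(\Frob_{\mf q})=\pm1$. Also $a_{\mf q}(f)$, the $T_{\mf q}$-eigenvalue at level $\mf n$, equals $\tr\overline{\rho}_f(\Frob_{\mf q})$ because $\mf q \nmid \mf n p$. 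Hence $a_{\mf q}(f)\equiv \pm(1+\norm(\mf q))\Mod p$. If $\psi$ is ramified, then $\psi$ and $\psi\chi_{\mathrm{cyc}}$ are ramified characters mod $p$ whenever $p$ is odd, since a ramified quadratic character stays ramified mod an odd $p$; this contradicts unramifiedness. For $p=2$ the twist is invisible mod $2$, and the conductor exponent of $E$ is $2$ when $\mf q\nmid 2$. This gives case (1), with the untwisted curve multiplicative.

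\emph{Potentially good case.} Here $E$ acquires good reduction over an extension whose inertia image $\Phi$ is cyclic of order $2,3,4,6$ when $\mf q\nmid 6$; wild cases are analysed similarly. The inertia group acts on $T_pE$ through $\Phi$, faithfully. For $p\ge3$, reduction mod $p$ is injective on finite-order elements of $\GL_2(\Z_p)$ of order prime to $p$, so inertia acts nontrivially on $E[p]$, a contradiction. One must also check that orders divisible by $p$ cannot occur: for $p=3$ with $|\Phi|\in\{3,6\}$, the kernel of $\GL_2(\Z_3)\to\GL_2(\F_3)$ is torsion-free for $p\ge3$, so the same conclusion holds. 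For $p=2$, only $|\Phi|=2$ survives, namely $-1$ acting trivially mod $2$. This is a ramified quadratic twist of a good-reduction curve, with conductor exponent $2$ for $\mf q\nmid 2$, giving case (1).

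I expect the main obstacle to be the residue characteristic $2$ and $3$ cases, meaning $\mf q\mid 6$ with $p\ne$ the residue characteristic of $\mf q$. There inertia can be non-abelian, for instance $Q_8$ or $\mathrm{SL}_2(\F_3)$, and conductor exponents exceed $2$. I would handle them with the same injectivity argument: the kernel of reduction mod $p$ on $\GL_2(\Z_p)$ is torsion-free for $p$ odd, and for $p=2$ its torsion is only $\pm1$ up to conjugacy of order $2$ elements congruent to $1$. This forces inertia to be $\{\pm1\}$ or trivial on $T_pE$, so $E$ is a quadratic twist of a good-reduction curve. The exponent $2$ claim then needs care when $\mf q\mid 2$, and I would restrict to or check it in that situation.
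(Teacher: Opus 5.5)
Your argument is correct, and it shares the paper's skeleton. In both, $E[p]\simeq\overline{\rho}_f$ is unramified at $\mf{q}$, and one splits into potentially multiplicative and potentially good reduction. A ramified quadratic twist $\psi$ stays visible modulo odd $p$, and in the multiplicative case the Tate-curve shape gives $a_{\mf{q}}(f)\equiv\pm(1+\norm(\mf{q}))$.

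Where you differ is the potentially good case. The paper notes that $\rho_F(I_{\mf{q}})$ lies in the kernel of reduction, hence is pro-$p$. It then goes through Kraus's list $C_2,C_3,C_4,C_6,Q_8,C_3\rtimes C_4,\SL_2(\F_3)$, discarding the non-$p$-groups. It also uses that $1+3M_2(\Z_3)$ has no elements of order $3$ and $1+2M_2(\Z_2)$ has none of order $4$. You instead use that $1+pM_2(\Z_p)$ is torsion-free for odd $p$. This kills every nontrivial finite inertia image at once, including the wild cases at $\mf{q}\mid 6$, with no classification needed. Your route is more uniform and more elementary; the paper's makes the possible inertia groups explicit and cites a reference for the quadratic-twist conclusion.

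Two small corrections.

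For $p=2$, your description of the torsion in $1+2M_2(\Z_2)$ is inaccurate: it contains non-scalar involutions such as $\mathrm{diag}(1,-1)$. What you actually need is two facts. First, this kernel has no elements of order $4$. Second, $\rho_{E,2}(I_{\mf{q}})\subset\SL_2(\Z_2)$, because $\det\rho_{E,2}=\chi_{\mathrm{cyc}}$ is unramified at $\mf{q}$. Since $-1$ is the only involution in $\SL_2(\Z_2)$, the inertia image is $\{\pm1\}$.

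The obstacle you flag at the end is vacuous. The hypothesis $(\mf{q},p)=1$ forces $\mf{q}\nmid 2$ whenever $p=2$. So in case (1) the inertia image lies in the pro-$2$ group $1+2M_2(\Z_2)$, while wild inertia at $\mf{q}$ is pro-$\ell$ with $\ell\neq 2$. Hence the Swan conductor vanishes and the exponent is $\dim V/V^{I_{\mf{q}}}=2$. For odd $p$ with $\mf{q}\mid 6$, your own arguments already force multiplicative reduction with exponent $1$. So no wild conductor computation is ever required.
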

For an explicit example of case (1), see Example \ref{ex:failure-2}.

\begin{proof}
  Let $\rho_f$ and $\rho_F$ denote the Galois representations associated to $f$ and $F$, respectively, with $\rho_f \cong \overline{\rho}_F \Mod{p}$.
  We may realise $\rho_F$ as $T_p E \cong \Z_p \oplus \Z_p$, and observe that since $\rho_f$ is unramified at $\mf{q}$ but $\rho_F$ is not, the subgroup $\rho_F(I_{\mf{q}}) \subset \GL_2(\Z_p)$ is in the kernel of reduction mod $p$.
  Accordingly, $\rho_F(I_{\mf{q}})$ is a closed subgroup of a pro-$p$ group, hence itself pro-$p$. 

  By the work of Serre--Tate \cite{serre1968}, the size of the inertia image $\rho_F(I_{\mf{q}})$ is determined by the reduction type of $E$ over $K_{\mf{q}}$: it is trivial in the case of good reduction, finite in the case of potentially good reduction, and infinite in the case of potentially multiplicative reduction.
  The good reduction case is ruled out by assumption.

  In the potentially good reduction case, we know that $\rho_F(I_{\mf{q}})$ is one of the groups
  \begin{equation}
\{C_2,C_3,C_4,C_6, Q_8, C_3 \rtimes C_4, \SL_2(\F_3) \},
  \end{equation}
  by \cite{kraus1990}.
  None of these are $p$-groups when $p\ge 5$.
  For $p=3$, the only $3$-group in the list is $C_3$.
  But an elementary argument shows that $\ker(\GL_2(\Z_3) \to \GL_2(\F_3))$ contains no element of order $3$, so $\rho_F(I_{\mf{q}})$ must be trivial.
  A similarly simple computation shows that when $p=2$ there are no elements of order $4$ in $\ker(\GL_2(\Z_2) \to \GL_2(\F_2))$, so the only $2$-group possibly isomorphic to $\rho_F (I_{\mf{q}})$ is $C_2$.
  This case does occur, and by \cite[Lemma 4.2]{coppola2026}, the base change $E_{K_{\mf{q}}}$ is then a ramified quadratic twist of a curve over $K_{\mf{q}}$ with good reduction.

  It remains to rule out additive reduction in the potentially multiplicative case.
  This happens precisely when $E$ attains multiplicative reduction over a ramified quadratic extension $K'_{\mf{q}}/K_{\mf{q}}$, in which case a twist $E'$ of $E$ has multiplicative reduction over $K_{\mf{q}}$.
  Write $\rho_F \cong \rho_{E'} \otimes \psi$ with $\psi$ the corresponding quadratic character.
  We claim that $\rho_{E'}|_{G_{K_{\mf{q}}}}$ is upper-triangular.
  Indeed, we have an isomorphism of $G_{K_{\mf{q}}}$-modules $E'(\overline{K}_{\mf{q}}) \cong \overline{K}_{\mf{q}}^{\times}/q^{\Z}$ where $q \in K_{\mf{q}}$ is the Tate parameter for $E'$.
  A basis for the $p^{n}$-torsion of $E'$ is given by $\{\zeta_{p^{n}}, q^{1/p^{n}}\}$, and $\rho_{E'}$ acts like the $p$-adic cyclotomic character $\chi_{\mathrm{cyc}}$ on the first basis vector and trivially on the quotient since $q \in K_{\mf{q}}$, hence the decomposition.
  Since $\chi_{\mathrm{cyc}}$ is unramified at $\mf{q}$, we find that
  \begin{equation}
    \label{eq:15}
    \rho_F |_{I_{\mf{q}}} \cong \begin{pmatrix}
      \psi & \ast \\ 0 &\psi
    \end{pmatrix},
  \end{equation}
  so projection onto the diagonal gives a surjection $\rho_F(I_{\mf{q}}) \to \Z/2$ as $\psi$ is ramified.
  But this is impossible when $p$ is odd since $\rho_F(I_{\mf{q}})$ is pro-$p$. The claimed reduction types then follow, and the valuations of the conductors are well-known consequences.

  Finally, we prove the formula for $a_{\mf{q}}(f)$ in the case of multiplicative reduction.
  The argument above with $E$ in place of $E'$ shows that the restriction of $\overline{\rho}_F$ to the decomposition group at $\mf{q}$ has the shape
  \begin{equation}
    \overline{\rho}_F|_{G_{K_\mf{q}}} = \begin{pmatrix}
      \eta \overline{\chi}_{\mathrm{cyc}} & \ast \\ 0 & \eta
    \end{pmatrix},
  \end{equation}
  where $\eta$ is an unramified quadratic character, and $\overline{\chi}_{\mathrm{cyc}}$ is the mod $p$ cyclotomic character.
  We then compute  
  \begin{equation}
    a_{\mf{q}} (f) = \tr \rho_f (\Frob_{\mf{q}}) = \tr \overline{\rho}_F(\Frob_{\mf{q}}) =  \eta(\Frob_{\mf{q}})(\overline{\chi}_{\mathrm{cyc}}(\Frob_{\mf{q}}) + 1) = \pm (\norm(\mf{q}) + 1),
  \end{equation}
  as claimed.
\end{proof}

\begin{remark}\label{rmk:blocking}
  There exist many ethereal forms in our dataset whose Galois representations we do not expect to lift to an elliptic curve, on the basis of Proposition \ref{prop:ethereal-ellcurve-steinberg} and the Ramanujan bound.
  For example, over $K = \Q(\sqrt{-2})$, we have a mod $19$ ethereal form $f$ at level $\n = (1 - 6\omega)$ (label \texttt{73.1}), with $\omega = \sqrt{-2}$.
  Its Hecke eigenvalue at the prime $\p_{3} = (1 - \omega)$ (label \texttt{3.2}) is $5$; a lift $F$ associated to an elliptic curve (i.e. one with rational Hecke eigenvalues) must have $\abs{a_{\p_3}(F)} \leq 2\sqrt{3}$.
  However, there are no integers $n \equiv 5 \Mod{19}$ satisfying this inequality.
  This problem can be avoided if $\p_3$ appears in the level of $F$, as the Hecke eigenvalues for primes in the level of $F$ need not match those of $f$ per our conjecture.
  But we have $a_{\p_3}(f)^2 - (1+\norm(\p_3))^2 \equiv 9 \Mod{19}$, and so this cannot happen.
	
  For ethereal forms in large characteristic, there are very few possible Hecke eigenvalues for primes of small norm for which an integer lift can be chosen that also satisfies the Ramanujan bound.
  As primes satisfying the level-raising condition are also expected to be rare (see Section \ref{sec:feasible}), we typically do not expect lifts to be associated to elliptic curves for large characteristic.
\end{remark}

\subsection{Level raising following Ribet and Calegari--Venkatesh}\label{sec:level-raising}
In this section, we study the relationship between Ribet's famous level-raising result and mod $p$ forms on cohomology.
We emphasise that the ideas of this section are largely due to Calegari and Venkatesh~\cite{calegari2012a}, which we merely adapt to our setup.

Recall that there are natural degeneracy maps
\begin{equation}
  \alpha_1,\alpha_{\mf{l}} : H^{1}(\Gamma_0(\mf{n}), \F_p) \to H^{1}(\Gamma_0(\mf{nl}), \F_p)
\end{equation}
and 
\begin{equation}
  \alpha^{\vee}_1,\alpha^{\vee}_{\mf{l}} : H^{1}(\Gamma_0(\mf{nl}), \F_p) \to H^{1}(\Gamma_0(\mf{n}), \F_p)
\end{equation}
defined by restriction and corestriction of the subgroups
\begin{equation}
  \label{eq:29}
  \Gamma_0(\mf{nl}) \subset \Gamma_0 (\mf{n})
  \quad \text{and} \quad
  \begin{psmallmatrix} \lambda & 0 \\ 0 & 1 \end{psmallmatrix} \Gamma_0(\mf{nl})
  \begin{psmallmatrix} \lambda & 0 \\ 0 & 1 \end{psmallmatrix}^{-1}
  \subset \Gamma_0 (\mf{n}),
\end{equation}
for $\mf{l} = \lambda \OO_K$.

The following proposition for $\Gamma_0(N) \subset \SL_2(\Z)$, a straightforward consequence of Ribet's results and the Deligne--Serre lifting lemma, will motivate later definitions in the Bianchi setting.

\begin{proposition}\label{prop:level-raising-over-Q}
  Let $p > 3$ and $\ell$ be distinct primes, and fix $N \in \N$ coprime to both.
  Suppose $f \in H^{1}(\Gamma_0(N), \F_p)$  is a Hecke eigenform satisfying $a_\ell (f) = \pm (1+\ell) \Mod{p}$ with $\rho_f$ irreducible.
  Then there exists a number field $E$ and an eigenform $G \in H^{1}(\Gamma_0(N \ell), \OO_E)$ which is new at $\ell$, such that the reduction of $G$ modulo a prime above $p$ is in the image of the degeneracy map
  \begin{equation}
\alpha_1 \oplus \alpha_\ell : H^{1}(\Gamma_0(N), \F_p)^{2} \to H^{1}(\Gamma_0(N\ell), \F_p).
  \end{equation}
\end{proposition}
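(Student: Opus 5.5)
The plan is to reduce to Ribet's level-raising theorem together with the Deligne--Serre lifting lemma, working throughout with the Hecke algebra $\mathbb{T}$ acting on $H^{1}(\Gamma_0(N\ell), \Z)$. Let $\mathfrak{m} \subset \mathbb{T}$ be the maximal ideal cut out by the eigenvalues of $f$. Since $\rho_f$ is irreducible, $\mathfrak{m}$ is non-Eisenstein. For the classical modular curve $X_0(N)$, the groups $H^{1}(\Gamma_0(N),\Z)$ and $H^1(\Gamma_0(N\ell),\Z)$ are torsion-free, and localisation at a non-Eisenstein $\mathfrak{m}$ removes the boundary contribution. Consequently $H^{1}(\Gamma_0(N),\F_p)_{\mathfrak{m}} = H^{1}(\Gamma_0(N),\Z)_{\mathfrak{m}} \otimes \F_p$; there are no ethereal phenomena here, which is exactly what fails in the Bianchi case. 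The same holds at level $N\ell$.

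\textbf{Step 1: Ihara's lemma and the level-raising congruence.} Consider the composite
\[
(\alpha_1^\vee \oplus \alpha_\ell^\vee) \circ (\alpha_1 \oplus \alpha_\ell) : H^{1}(\Gamma_0(N),\F_p)^{2} \to H^{1}(\Gamma_0(N),\F_p)^{2}.
\]
The standard calculation gives it as the matrix $\begin{psmallmatrix} \ell+1 & T_\ell \\ T_\ell & \ell+1 \end{psmallmatrix}$, whose determinant is $(\ell+1)^2 - T_\ell^2$. By hypothesis this determinant lies in $\mathfrak{m}$. By Ihara's lemma, which holds for $\Gamma_0(N)$ with $p \nmid N\ell$ (Ribet), the map $\alpha_1 \oplus \alpha_\ell$ is injective on $\mathfrak{m}$-localised cohomology. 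Its image $V$ therefore intersects nontrivially the kernel of $\alpha_1^\vee \oplus \alpha_\ell^\vee$, i.e. the $\ell$-new part, after reduction mod $p$. Equivalently, $\mathfrak{m}$ is in the support of the $\ell$-new quotient $\mathbb{T}^{\ell\text{-new}}$. This is Ribet's theorem, and I would simply cite it. The hypothesis $p>3$ is used to avoid elliptic-point torsion issues, so that $\Gamma_0(N)$-cohomology behaves like that of a torsion-free group.

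\textbf{Step 2: lifting via Deligne--Serre.} The $\ell$-new Hecke algebra $\mathbb{T}^{\ell\text{-new}}$ is finite and torsion-free over $\Z$, and $\mathfrak{m}$ is one of its maximal ideals. By Deligne--Serre, choose a minimal prime of $\mathbb{T}^{\ell\text{-new}}_{\mathfrak{m}}$. This yields a ring homomorphism $\mathbb{T}^{\ell\text{-new}} \to \OO_E$ for a number field $E$, reducing to the eigenvalue system of $f$ modulo a prime $\lambda \mid p$. The corresponding eigenclass $G \in H^1(\Gamma_0(N\ell),\OO_E)$ is new at $\ell$, and we scale it to be primitive, so that $\bar G \neq 0$.

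\textbf{Step 3: $\bar G$ lies in the old image.} This is the main obstacle. The reduction $\bar G$ is a Hecke eigenclass in $H^1(\Gamma_0(N\ell),\F_p)_{\mathfrak{m}}$, but it need not be a priori old. I would argue via multiplicity one: $\mathfrak{m}$-torsion, i.e. the $\mathfrak{m}$-eigenspace for the prime-to-$N\ell$ Hecke operators, in $H^1(\Gamma_0(N\ell),\F_p)$. Because $\rho_f$ is unramified at $\ell$, Mazur's principle and Ribet's analysis of the $\ell$-new quotient, via the character group of the component group of $J_0(N\ell)$ at $\ell$, show that this eigenspace is contained in $V = \im(\alpha_1\oplus\alpha_\ell)$. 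Concretely, the $U_\ell$-eigenvalue of $\bar G$ is $\pm 1$, and one checks that $V[\mathfrak{m}]$ already contains the $(U_\ell \mp 1)$-eigenvectors. A dimension count against the multiplicity-one bound $\dim H^1(\Gamma_0(N\ell),\F_p)[\mathfrak{m}] \le 2$, applied to the full Hecke algebra including $U_\ell$, then forces $\bar G \in V$. If multiplicity one is unavailable, I would instead replace $G$ by a suitable $\OO_E$-combination within the $\mathfrak{m}$-eigenspace. The proposition only asserts the existence of some such $G$, so this suffices.
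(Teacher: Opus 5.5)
Your proposal is correct and is essentially the paper's argument: Ribet's level-raising theorem together with Deligne--Serre produces an $\ell$-new eigenform $G$ congruent to $f$, and multiplicity one for the full Hecke algebra (including $U_\ell$) at level $N\ell$ forces $\overline{G}$ into the image of $\alpha_1 \oplus \alpha_\ell$. The differences are minor: you apply Deligne--Serre to the $\ell$-new Hecke algebra after citing Ribet (the paper lifts at level $N$ first), you spell out the dimension count that the paper compresses into ``by multiplicity $1$'', and your fallback for unavailable multiplicity one is unnecessary, since Mazur's multiplicity one applies here ($p \nmid 2N\ell$, $\rho_f$ irreducible), and would not work anyway, because an $\OO_E$-combination of distinct eigenforms is no longer an eigenform.
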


\begin{proof}
  By the Deligne--Serre lifting lemma \cite[Lemme 6.11]{deligne1974b}, the Hecke eigenvalue system associated to $f$ lifts to a character on the Hecke algebra of $H^{1}(\Gamma_0(N), \Z)$, hence gives an eigenclass $F$ in $H^{1}(\Gamma_0(N), \OO_E)$ congruent to $f$ modulo a place $\p \le \OO_E$ above $p$, where $\OO_E$ is the integer ring in some sufficiently large finite extension $E/\Q$.
  By Ribet's level-raising result~\cite[Theorem 1]{ribet1990a}, there exists an $\ell$-new form $G$ in $H^{1}(\Gamma_0(N\ell), \OO_E)$ congruent to $f$ modulo $\p$.
  The reduction mod $\p$ of $G$ has the same Hecke eigenvalues as $f$ away from $\ell$ for the Hecke algebra on $H^{1}(\Gamma_0(N\ell), \F_p)$, and so by multiplicity $1$, the reduction of $G$ lies in the span of $\alpha_1(f), \alpha_\ell(f)$.
\end{proof}

In the Bianchi setting, the Deligne--Serre lifting lemma does not apply, since it requires a characteristic $0$ lattice.
However, as in~\cite[Theorem 4.3.1]{calegari2012a}, we may use similar ideas to prove a criterion including a similar level-raising condition which can be used to construct candidate levels for which lifts of ethereal forms may exist.

\begin{definition}\label{def:level-raising}
  Let $f \in H^{1}(\Gamma_0(\mf{n}), \F_p)$ be an ethereal Bianchi modular form, and let $\mf{l} \nmid p\cdot \mf{n}$ be a prime ideal.
  We say $f$ \emph{satisfies the level-raising condition at $\mf{l}$}, or that \emph{$\mf{l}$ is a level-raising prime for $f$} if $a_{\mf{l}} (f) = \pm (1 + \norm(\mf{l})) \Mod{p}$.
\end{definition}

\begin{proposition}\label{prop:level-raising}
  Suppose $f \in H^{1} (\Gamma_0(\mf{n}), \F_p)$ is a non-Eisenstein eigenform satisfying the level-raising condition at a prime ideal $\mf{l}$, and that $p\nmid \#\OO_K^{\times}$.
  Then there exists an eigenform $g \in H^{1}_{\parab}(\Gamma_0(\mf{nl}), \F_p)$ in the kernel of the degeneracy map
  \begin{equation}
    \alpha^{\vee} \colon H^{1}_{\parab}(\Gamma_0(\mf{nl}), \F_p) \to H^{1}_{\parab}(\Gamma_0(\mf{n}), \F_p)^{2}, \quad \alpha^{\vee}(g) \defeq  (\alpha_1^{\vee}(g), \alpha^{\vee}_{\mf{l}}(g)),
  \end{equation}
  and satisfying $a_{\mf{q}}(f) = a_{\mf{q}}(g)$ for all $(\mf{q}, \mf{nl}) =1$.
\end{proposition}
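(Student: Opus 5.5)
We follow the strategy of \cite[Theorem 4.3.1]{calegari2012a} and phrase everything in terms of the composite $\alpha^{\vee}\circ\alpha$, where $\alpha = \alpha_1 \oplus \alpha_{\mf{l}} \colon H^{1}(\Gamma_0(\mf{n}), \F_p)^{2} \to H^{1}(\Gamma_0(\mf{nl}), \F_p)$. Let $\mf{m}$ be the maximal ideal of the (anemic) Hecke algebra $\mathbb{T}$, generated by the $T_{\mf{q}}$ with $\mf{q} \nmid p\mf{nl}$, cut out by the eigenvalue system of $f$. Since $f$ is non-Eisenstein, we first check that localising at $\mf{m}$ kills the non-parabolic part: the boundary cohomology of each cusp stabiliser is Eisenstein. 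Concretely, the Hecke action on $\prod_r H^{1}(\Gamma_0(\mf{n})_r, \F_p)$ is through characters of the form $\chi_1 + \chi_2$. This gives $H^{1}(\Gamma_0(\mf{n}),\F_p)_{\mf{m}} = H^{1}_{\parab}(\Gamma_0(\mf{n}),\F_p)_{\mf{m}}$, and likewise at level $\mf{nl}$. The hypothesis $p \nmid \#\OO_K^{\times}$ is used here and below, so that the stabilisers of cusps and the torsion in $\Gamma_0$ do not contribute $p$-torsion obstructions.

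The key computation is the $2\times 2$ matrix of $\alpha^{\vee}\circ\alpha$ on $H^{1}(\Gamma_0(\mf{n}),\F_p)^{2}$. By the usual double coset computation (restriction followed by corestriction), we expect
\begin{equation*}
  \alpha^{\vee}\circ\alpha = \begin{pmatrix} \norm(\mf{l}) + 1 & T_{\mf{l}} \\ T_{\mf{l}} & \norm(\mf{l}) + 1 \end{pmatrix},
\end{equation*}
possibly up to a diamond-type operator, which is trivial here because $p \nmid \#\OO_K^{\times}$ and the weight is trivial. The determinant is $(1+\norm(\mf{l}))^{2} - T_{\mf{l}}^{2}$. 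On the generalised $\mf{m}$-eigenspace this lies in $\mf{m}$ precisely because of the level-raising condition $a_{\mf{l}}(f) = \pm(1+\norm(\mf{l}))$. Hence $\alpha^{\vee}\circ\alpha$ is not injective on $(H^{1}(\Gamma_0(\mf{n}),\F_p)_{\mf{m}})^{2}$: it has a nonzero kernel vector $(x, \mp x)$, with $x$ an $\mf{m}$-eigenvector, for example $x = f$.

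Next we produce $g$. The element $y = \alpha(x,\mp x)$ lies in $H^{1}(\Gamma_0(\mf{nl}),\F_p)_{\mf{m}}$ and satisfies $\alpha^{\vee}(y) = 0$. We must show $y \neq 0$, i.e.\ that $\alpha$ is injective on the $\mf{m}$-part. This is the analogue of Ihara's lemma: the kernel of $\alpha_1 \oplus \alpha_{\mf{l}}$ is controlled by cohomology of the amalgam $\Gamma_0(\mf{n}) *_{\Gamma_0(\mf{nl})} \lambda\Gamma_0(\mf{n})\lambda^{-1}$. By the congruence subgroup property for $\SL_2(\OO_K[1/\mf{l}])$ (Serre), this amalgam has finite abelianisation, and its cohomology is Eisenstein. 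It therefore vanishes after localising at the non-Eisenstein $\mf{m}$. Thus $y \ne 0$ lies in $\ker \alpha^{\vee} \cap H^{1}_{\parab}(\Gamma_0(\mf{nl}),\F_p)_{\mf{m}}$. This space is a nonzero finite module over the Artinian local ring $\mathbb{T}_{\mf{m}}$ that is stable under $\mathbb{T}$, because degeneracy maps commute with $T_{\mf{q}}$ for $\mf{q} \nmid \mf{nl}$. It therefore contains a nonzero $\mf{m}$-torsion vector $g$, which is an eigenform with $a_{\mf{q}}(g) = a_{\mf{q}}(f)$ for all $(\mf{q},\mf{nl}) = 1$. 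We may also enlarge $\mathbb{T}$ by $U_{\mf{l}}$ and take an eigenvector for it as well.

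The main obstacle is the Ihara-type injectivity of $\alpha$ after localisation at $\mf{m}$, which must be established over $\F_p$ without any characteristic $0$ input. I would handle it through the Mayer--Vietoris sequence for the amalgamated product, using the congruence subgroup property to identify $H^{1}$ of the amalgam with characters factoring through an abelian, hence Eisenstein, quotient. I would also need to check carefully that the same argument handles the parabolic subspaces compatibly. This compatibility should follow from the Eisenstein nature of the boundary cohomology, as shown in the first step.
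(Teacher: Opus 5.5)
Your proposal is correct and follows the paper's proof: compute $\alpha^{\vee}\circ\alpha$, use the level-raising condition to obtain a nonzero kernel vector such as $(f,\mp f)$, and push it forward by $\alpha$, which is injective after localising at $\mf{m}$ by Ihara's lemma. The paper simply cites Klosin for this injectivity rather than re-deriving it from the amalgam and the congruence subgroup property as you plan to. One correction to your sketch of that lemma: for imaginary quadratic $K$ the congruence kernel of $\SL_2(\OO_K[1/\mf{l}])$ is $\mu_K$ rather than trivial, and it is the hypothesis $p\nmid\#\OO_K^{\times}$ that makes mod $p$ characters of the amalgam factor through congruence quotients, rather than anything about cusp stabilisers.
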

The proof is essentially an adaptation of that of~\cite[Theorem 4.3.1]{calegari2012a}, which goes back to Ribet.
A crucial ingredient is Klosin's Ihara lemma for Bianchi congruence subgroups.

\begin{lemma}[{\cite[Theorem 2]{klosin2008}}]
  Suppose $p \nmid \#\OO_K^{\times}$, and let $\mf{m}$ be a non-Eisenstein maximal ideal of the Hecke algebra of $H^{1}_{\parab}(\Gamma_0(\mf{n}), \F_p)$.
  Then the localisation of the map
  \begin{equation}
    \label{eq:30}
    \alpha : H^{1}_{\parab} (\Gamma_0(\mf{n}), \F_p)^{2} \to H^{1}_{\parab}(\Gamma_0(\mf{nl}), \F_p), \quad \alpha(f_1, f_2) \defeq  \alpha_1(f_1) + \alpha_{\mf{l}}(f_2)
  \end{equation}
  to $H^{1}_{\parab}(\Gamma_0(\mf{n}), \F_p)^{2}_{\mf{m}}$ is injective.
\end{lemma}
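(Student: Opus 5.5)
The plan is the classical Ihara argument via Serre's tree and the congruence subgroup property, adapted to $\PGL_2$ over $\OO_K[1/\lambda]$ with $\mf{l} = \lambda\OO_K$. Since the parabolic cohomology is a subspace of the full cohomology and $\alpha$ is the restriction of the corresponding map on full $H^1$, it suffices to prove injectivity of $\alpha_{\mf{m}}$ on $H^{1}(\Gamma_0(\mf{n}),\F_p)^{2}_{\mf{m}}$. I also expect to have to check that localising at $\mf{m}$ commutes with passing to this subspace. That holds because the parabolic subspace is a Hecke-stable direct summand after localisation at a non-Eisenstein ideal, the boundary cohomology being Eisenstein.

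\textbf{Step 1 (amalgam).} Set $\Gamma = \Gamma_0(\mf{n})$ and $\Gamma' = \begin{psmallmatrix} \lambda & 0 \\ 0 & 1\end{psmallmatrix}^{-1}\Gamma_0(\mf{n})\begin{psmallmatrix} \lambda & 0 \\ 0 & 1\end{psmallmatrix}$, so that $\Gamma\cap\Gamma' = \Gamma_0(\mf{nl})$. Both groups sit inside the $\mf{l}$-arithmetic group $\Gamma^{(\mf{l})} \subset \PGL_2(\OO_K[1/\lambda])$, defined by the level-$\mf{n}$ condition. By Serre's theorem on groups acting on the Bruhat--Tits tree of $\PGL_2(K_{\mf{l}})$, this group is an amalgam $\Gamma^{(\mf{l})} \cong \Gamma *_{\Gamma_0(\mf{nl})} \Gamma'$. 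Here one uses the edge stabiliser and the two vertex stabilisers; the quotient is a single edge, up to the usual care with $\PGL_2$ versus $\PSL_2$ and the determinant/unit issues, where $p\nmid\#\OO_K^\times$ should make these harmless mod $p$. The Mayer--Vietoris sequence for an amalgam gives an exact sequence
\[
H^{1}(\Gamma^{(\mf{l})},\F_p) \to H^{1}(\Gamma,\F_p)\oplus H^{1}(\Gamma',\F_p) \to H^{1}(\Gamma_0(\mf{nl}),\F_p),
\]
with the second map being a difference of restrictions. Identifying $H^1(\Gamma',\F_p)$ with $H^1(\Gamma,\F_p)$ via conjugation turns this map into $\alpha$, up to a sign on one factor. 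So $\ker\alpha$ is the image of $H^{1}(\Gamma^{(\mf{l})},\F_p)$, and this identification is Hecke-equivariant for $T_{\mf{q}}$ with $\mf{q}\nmid \mf{nl}p$.

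\textbf{Step 2 (congruence subgroup property).} Next I would show that every class in $H^{1}(\Gamma^{(\mf{l})},\F_p) = \Hom(\Gamma^{(\mf{l})},\F_p)$ is Eisenstein. The group $\OO_K[1/\lambda]^\times$ is infinite, so by Serre's solution of the congruence subgroup problem for $\SL_2$ over $S$-integers with $|S|\ge 2$, the congruence kernel is trivial (or finite central, of order dividing $\#\mu_K$, which is prime to $p$). Hence any homomorphism to $\F_p$ factors through a congruence quotient $\PGL_2(\OO_K[1/\lambda]/\mf{c})$. The abelianisations of these finite groups, after discarding the part prime to $p$, are essentially controlled by the determinant (and by small exceptional quotients for small residue fields, which must be checked against $p$). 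Thus such a homomorphism factors through a character of a ray class group. On such classes $T_{\mf{q}}$ acts by $\chi(\mf{q})(1+\norm(\mf{q}))$ or a similar expression of the form $\chi_1(\mf{q})+\chi_2(\mf{q})$, so every Hecke eigensystem occurring in $H^1(\Gamma^{(\mf{l})},\F_p)$ is Eisenstein in the sense defined above.

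\textbf{Step 3 (conclusion).} Since $\mf{m}$ is non-Eisenstein, $H^{1}(\Gamma^{(\mf{l})},\F_p)_{\mf{m}} = 0$. Localisation is exact, so $(\ker\alpha)_{\mf{m}} = 0$, which is the claimed injectivity. The main obstacle is Step 2. One must make the CSP input precise for $\PGL_2$ rather than $\SL_2$ and handle the prime-to-$p$ centre and units, which is where $p\nmid \#\OO_K^\times$ enters. One must also verify carefully that the Hecke action on the image of $H^1(\Gamma^{(\mf{l})},\F_p)$ really is by an Eisenstein system, including the degenerate possibility that $p$ divides $\norm(\mf{l})\pm1$ or that small residue fields contribute extra abelian quotients. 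I would first try to reduce everything to the determinant map $\Gamma^{(\mf{l})}\to \OO_K[1/\lambda]^\times/(\OO_K[1/\lambda]^\times)^2$ and compute the Hecke action directly on inflated classes.
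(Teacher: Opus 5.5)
The paper gives no argument of its own here. It imports Klosin's Theorem~2 and only remarks that his sheaf-cohomological statement translates to group cohomology via his Section~2. Your outline is the standard Ihara--Ribet route to such a statement: the Bruhat--Tits amalgam, Mayer--Vietoris identifying $\ker\alpha$ with $H^1(\Gamma^{(\mf{l})},\F_p)$, and Serre's congruence subgroup theorem. It usefully shows where the hypotheses enter: principality of $\mf{l}$ for the amalgam, and $p\nmid\#\OO_K^\times$ to kill the congruence kernel $\cong\mu_K$ (which also forces $p$ odd).

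Two small precisions. First, the passage from parabolic to full cohomology needs only exactness of localisation, not the direct-summand claim. Second, for $\PGL_2$ you must pass to the subgroup with $v_{\mf{l}}(\det)$ even. Otherwise $\mathrm{diag}(\lambda,1)$ lies in the group, it does not preserve the bipartition of the tree, and the quotient is not a segment.

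The real problem is in Step~2, where the mechanism you propose is wrong. You say the relevant abelianisations are controlled by the determinant, and you plan to reduce to $\Gamma^{(\mf{l})}\to\OO_K[1/\lambda]^\times/(\OO_K[1/\lambda]^\times)^2$. That target is an elementary abelian $2$-group, so for odd $p$ this would prove $H^1(\Gamma^{(\mf{l})},\F_p)=0$, which is false in general.

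Since $\mf{l}\nmid\mf{n}$, reduction mod $\mf{n}$ lands in the Borel. So $\gamma\mapsto ad^{-1}\bmod\mf{n}$ is a homomorphism $\Gamma^{(\mf{l})}\to(\OO_K/\mf{n})^\times$, well defined modulo scalars. Its image contains all squares, because $\SL_2(\OO_K)\to\SL_2(\OO_K/\mf{n})$ is surjective. Hence $\Hom(\Gamma^{(\mf{l})},\F_p)\neq 0$ whenever $p\mid\#(\OO_K/\mf{n})^\times$. These give antidiagonal classes $(\phi_0,-\phi_0)\in\ker\alpha$ with $\phi_0(\gamma)=\psi(ad^{-1}\bmod\mf{n})$, the analogue of Ribet's Shimura subgroup.

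The lemma survives because these classes are Eisenstein. Define $\tilde\phi(g)=\psi(a_g d_g^{-1}\bmod\mf{n})$ on the group of elements of $\GL_2(K)$ that are integral, invertible and upper triangular modulo $\mf{n}$ at the primes dividing $\mf{n}$. This is a homomorphism, and the group contains the Hecke representatives $\delta_i$ for $\mf{q}\nmid\mf{nl}$. In $(T_{\mf{q}}\phi)(\gamma)=\sum_i\tilde\phi(\delta_i\gamma\delta_{j(i)}^{-1})$, the terms $\tilde\phi(\delta_i)-\tilde\phi(\delta_{j(i)})$ telescope because $j$ is a permutation, giving $T_{\mf{q}}\phi=(1+\norm(\mf{q}))\phi$.

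The small exceptional local quotients you mention, such as $\SL_2(\Z_3)^{\mathrm{ab}}\cong\Z/3$ at split places above $p=3$, are handled the same way up to a quadratic twist if they occur, giving $\chi(\mf{q})(1+\norm(\mf{q}))$. So Step~2 should determine the full $\F_p$-abelianisation of the congruence completion, in particular the Borel tori at $\mf{n}$, and then apply this telescoping computation. Reducing to the determinant does not work.
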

While the theorem is originally stated for sheaf cohomology, the translation to group cohomology is easily extracted from Section 2 of the same paper.

\begin{proof}[Proof of Proposition \ref{prop:level-raising}]
  A tedious but straightforward computation shows that the matrix of $\alpha^{\vee} \circ \alpha$ is given by
  \begin{equation}
    \label{eq:31}
    \begin{pmatrix}
      1 + \norm(\mf{l}) & T_{\mf{l}} \\ 
      T_{\mf{l}} & 1 + \norm(\mf{l})
    \end{pmatrix}
  \end{equation}
  whose determinant is $0$ on the $f$-eigenspace since $\mf{l}$ is a level-raising prime for $f$.
  By Klosin's Ihara lemma, the localisation $\alpha_{\mf{m}}$ is injective, so the kernel of $\alpha^{\vee}$ is non-zero. 
\end{proof}
Note that Proposition \ref{prop:level-raising} is a statement purely in characteristic $p$, and makes no claim about existence of characteristic $0$ lifts.
However, \emph{if} the ethereal form $f$ lifts to a form $G$ of level $\Gamma_0(\mf{nl})$, then as $G$ is new at $\mf{l}$, the reduction mod $\mf{p} \mid p$, denoted $g$, will also be in the kernel of the mod $p$ degeneracy maps.
In other words, the eigenform $g$ has a chance to lift to characteristic $0$ at the same level even though the ethereal form $f$ by definition cannot.


\section{Computational aspects}\label{sec:computational-aspects}
In this section, we describe how to compute ethereal Bianchi eigenforms in $H^{1}(\Gamma_{0}(\mf{n}), \F_{p})$, and our methods for detecting lifts.
We compute the first cohomology of the Bianchi groups $\Gamma_{0}(\mf{n})$ via a method described by \c{S}eng\"{u}n \cite{sengun2011}, for which we give some details here.  
\subsection{Computing cohomology}\label{sec:computing-cohomology}
The first cohomology of any finitely generated group $G$ with coefficients in some finite rank module $V$ over a ring $R$ can be computed using the so-called \textit{Fox calculus}. In essence, one uses the $1$-cocycle relation $\phi(gh) = \phi(g)h + \phi(h)$ to express $\phi(x)$, for any $x \in G$, in terms of the values $\phi(g_i)$, for some generating set $\left\{g_i\right\}$. Relations in the group lead to relations between the values $\phi(g_i)$, which can then be expressed with linear algebra. 
\begin{example}
  When $K = \Q(\sqrt{-2})$, the principal Bianchi group is $\Gamma = \PSL_2(\Z[\sqrt{-2}])$.
  A presentation can be obtained from that of $\SL_2(\Z[\sqrt{-2}])$ in \cite[Proposition 4.3.4]{Fine1989}, and is given by
  \begin{equation*}
    \Gamma = \langle T, T_\omega, S \mid S^2 = (TS)^3 = [T,T_\omega] = [S, T_\omega]^2 = 1  \rangle, 
  \end{equation*}
  where $[g,h] = g^{-1}h^{-1}gh$ denotes the commutator.
  This isomorphism can be realised explicitly with the identifications $T = \begin{psmallmatrix}1 & 1 \\ 0 & 1\end{psmallmatrix}$, $T_{\omega} = \begin{psmallmatrix}1 & \omega \\ 0 & 1\end{psmallmatrix}$, and $S = \begin{psmallmatrix}0 & -1 \\ 1 & 0 \end{psmallmatrix}$, where $\omega = \sqrt{-2}$.
  A $1$-cocycle $\phi \in Z^1\left( \Gamma , V \right)$ must therefore satisfy the following relations:
  \begin{enumerate}
  \item $\phi(S)( 1 + S ) = 0$,
  \item $\phi(T)S( 1 + TS + (TS)^2 ) + \phi(S)(1 + TS + (TS)^2) = 0$,
  \item $\phi(T)T^{-1}(1-T_{\omega}^{-1}) + \phi(T_{\omega})(T^{-1}-1)T_{\omega}^{-1} = 0$,
  \item $\phi(T_{\omega}) (-S + S T_{\omega}^{-1} S T_{\omega} - T_{\omega}^{-1} S T_{\omega} + 1)  + \phi(S)(S + ST_{\omega}S + T_{\omega}^{-1} S T_{\omega} + T_{\omega}) = 0 $.
  \end{enumerate}

  Fixing a basis for $V$ over $R$, each of these conditions gives a condition satisfied by the values $\phi(T)$, $\phi(T_\omega)$ and $\phi(S)$ as elements of $V$. Conversely, any triple satisfying all four conditions specifies a $1$-cocycle, and so the space of cocycles is computable as a kernel of a block matrix. Similarly, a $1$-coboundary is of the form $\phi(g) = vg - v$ for some $v \in V$, and so the subspace $B^1\left(\Gamma, V \right)$ is expressible in terms of the images of $T-1$, $T_\omega-1$ and $S-1$ in $R[G]$ on $V$. The quotient of these two spaces gives $H^1\left( \Gamma , V \right)$. 
\end{example}
A $1$-cocycle can be evaluated at an element $g \in G$ using its values on the generators, the cocycle relation, and a word decomposition algorithm for $G$. For the fields we consider, the Euclidean algorithm on $\OO_K$ gives a word decomposition algorithm on $\PSL_2(\OO_K)$. 

In principle, one could compute $H^{1}(\Gamma, V)$ for $\Gamma \subset \PSL_2(\OO_K)$ in the same manner by computing a presentation for $\Gamma$.
However, it turns out to be computationally favourable to instead change the coefficient module, using Shapiro's lemma.

Namely, for groups $H \leq G$ and an $H$-module $V$, we have
\begin{equation}\label{EquationShapiroIsomorphism}
  H^1(G,\CoInd{G}{H}{V}) \simeq H^1(H,V),
\end{equation}
where 
\begin{equation*}
  \CoInd{G}{H}{V} = \{ f \colon G \to V \mid f(gh) = f(g)h, ~ \text{for all } g \in G, ~ h \in H \}
\end{equation*}
is the coinduced module.
This is a right $G$-module with the action $(f \cdot g)(x) = f(gx)$.
Let $\{h_i : i \in I \}$ be a set of right coset representatives of $H$ in $G$; an element $f$ of $\CoInd{G}{H}{V}$ is determined by its values on the $h_i$.
Consequently, when $[G : H] < \infty$, the coinduced module is isomorphic to the \emph{induced module},
\begin{equation*}
  \Ind{G}{H}{V} = \bigoplus_{i \in I} V,
\end{equation*}
where the (right) action of $g$ permutes the coset representatives. 
The ``forward'' isomorphism in Equation \ref{EquationShapiroIsomorphism} is given by restriction and evaluation at the identity element of $G$: 
\begin{equation*}
  \Sh(f) \colon h \mapsto f(h)(1_G).
\end{equation*}
In Section \ref{sec:degeneracy-maps}, we compute an explicit inverse in special cases to give an efficient formula for degeneracy maps on cohomology.

In the case of Bianchi groups, we have a convenient set of representatives for the cosets $I$:
Recall that $\GL_{2}(\OO_{K})$ acts transitively on $\P^{1}(\OO_K /\mf{n})$ by 
\begin{equation}
  \label{eq:23}
  \begin{pmatrix}
    a & b \\ c & d 
  \end{pmatrix}
  [x:y] = [a x + b y : c x + dy]. 
\end{equation}
The stabiliser of $\infty = [1:0]$ is $\Gamma_{0}(\mf{n})$, so the map $\GL_{2}(\OO_{K}) / \Gamma_{0}(\mf{n}) \to \P^{1}(\OO_{K}/ \mf{n})$ given by $\gamma \Gamma_{0}(\mf{n}) \mapsto \gamma \cdot \infty$ is a bijection.

\begin{remark}
By the Chinese remainder theorem, when $\mf{m},\mf{n} \le \OO_{K}$ are coprime, we have
\begin{equation}
  \label{eq:24}
  \PP^1(\OO_K/\mf{mn}) \cong \PP^1(\OO_K/\mf{m}) \times \PP^1(\OO_K/\mf{n}).
\end{equation}
This decomposition plays an important role for the formulas in Section \ref{sec:degeneracy-maps}.
\end{remark}

\subsection{Hecke operators}
\label{sec:hecke-operators}
Following Ash and Stevens \cite{ash1986} we define double coset operators on cohomology.
The operator associated to the matrix $\begin{bsmallmatrix} \pi & 0 \\ 0 & 1 \end{bsmallmatrix}$ for $\pi \in \OO_K$ is called the \emph{Hecke operator} at $\pi$ and is denoted by $T_{\pi}$.
By the Hecke equivariance of the Eichler--Shimura map, this double coset operator agrees with the usual Hecke operator in characteristic $0$.
The formula for the Hecke operator on cohomology can be found in many places, e.g.~in Chapter 8 of \sengun's thesis \cite{sengun2008}.
Let $f \in Z^1(\Gamma_0(\n),V)$, and fix a set of coset representatives $\{ r_i \colon i \in I \}$ of $\Gamma_0(\n\p)$ in $\Gamma_0(\n)$.
Then, fixing a generator $\p = (\pi)$, we have
\begin{equation}\label{eq:hecke-op}
  (T_{\pi} f)(X) = \sum_{i \in I} f(g r_i X r_{X(i)}^{-1} g^{-1} )g r_{X(i)},
\end{equation}
where $X(i)$ is the index of the coset $\Gamma_0(\n\p)r_i X$, and $g = \begin{bsmallmatrix} \pi & 0 \\ 0 & 1 \end{bsmallmatrix}$. When $(\n,\p) = 1$, the decomposition given in Equation \ref{eq:24} lets one instead use the coset representatives of $\Gamma_0(\p)$ in $\PSL_2(\OO_K)$. 

From the perspective of Galois representations, we are primarily interested in automorphic forms for $\PGL_2(\OO_K)$, rather than $\PSL_2(\OO_K)$.
This is because the Hecke operators in the $\PGL_2$ space satisfy $T_{\pi} = T_{u \pi}$ for all $u \in \OO_K^{\times}$.
That is, Hecke operators are associated to ideals rather than elements, and the associated mod $p$ Galois representation should have image in $\PGL_2(\F_{q})$.
In this case, we write the Hecke operator as $T_{\p}$.
The Hecke operators are compatible with the maps from Shapiro's lemma by \cite[Lemma 1.1.4]{ash1986}.

The subspace of $\PGL_2$ classes can be obtained from the $\PSL_2$ classes by introducing the \emph{star operator}.
Let $u$ be a generator of the unit group $\OO_K^{\times}$; for all imaginary quadratic fields except $\Q(\sqrt{-1})$ and $\Q(\sqrt{-3})$, we have $u = -1$.

\begin{proposition}
  Suppose $p > 2$. Mod $p$ cohomology classes for the larger group $\PGL_2(\OO_K)$ are precisely those classes $f$ for which the star operator
\begin{equation}
  f \mapsto f^{*}(X) \defeq f(JXJ^{-1})J
\end{equation}
has eigenvalue $1$, with $J = \begin{psmallmatrix}
  u & 0 \\ 0 & 1
\end{psmallmatrix}$.
\end{proposition}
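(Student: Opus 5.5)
The plan is to treat $H^1(\Gamma_0(\n),\F_p)$, with $\Gamma_0(\n)$ taken in $\PSL_2(\OO_K)$, as a module for the finite group $\Delta \defeq \Gamma_0(\n)^{\PGL}/\Gamma_0(\n)^{\PSL}$, where $\Gamma_0(\n)^{\PGL}$ denotes the image of $\Gamma_0(\n)\subset\GL_2(\OO_K)$ in $\PGL_2(\OO_K)$. We then identify the $\PGL_2$-classes with the $\Delta$-invariants via inflation--restriction.

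First we determine $\Delta$. The determinant induces an isomorphism $\PGL_2(\OO_K)/\PSL_2(\OO_K)\cong \OO_K^\times/(\OO_K^\times)^2$. This quotient is cyclic of order $2$ in every case: generated by $-1$ generically, by $i$ for $d=1$ (since $\{\pm1\}=(\OO_K^\times)^2$), and by the class of $-\omega$ for $d=3$. Since $J$ is diagonal, it lies in $\Gamma_0(\n)^{\PGL}$, and $\det J = u$ is a non-square. So $\Gamma_0(\n)^{\PGL} = \Gamma_0(\n)^{\PSL} \sqcup J\,\Gamma_0(\n)^{\PSL}$, and $\Delta\cong\Z/2$ is generated by the image of $J$.

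Next we use the standard conjugation action of the quotient on the cohomology of a normal subgroup, given on cocycles by $f\mapsto f(J X J^{-1})J$. This is exactly the star operator. It preserves cocycles and coboundaries, and on cohomology it is independent of the chosen coset representative, because inner conjugation acts trivially on cohomology. Since $J^2$ lies in $\Gamma_0(\n)^{\PSL}$ (it is $\operatorname{diag}(u^2,1)$, and $u^2$ is a square), $*$ is an involution on $H^1$.

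Now apply the inflation--restriction sequence
\begin{equation*}
0\to H^1(\Delta, V^{\Gamma})\to H^1(\Gamma_0(\n)^{\PGL},V)\to H^1(\Gamma_0(\n)^{\PSL},V)^{\Delta}\to H^2(\Delta,V^{\Gamma}).
\end{equation*}
Since $|\Delta|=2$ is invertible in $\F_p$ for $p>2$, the outer terms vanish. Hence restriction is an isomorphism onto the $\Delta$-invariants, which are exactly the classes with $f^*=f$. Because $*$ is an involution and $2$ is invertible, $H^1$ splits as the direct sum of the $(+1)$- and $(-1)$-eigenspaces of $*$, so "eigenvalue $1$" is an unambiguous description.

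The same argument works verbatim with the coinduced coefficients used in practice, via Shapiro's lemma, because $J$ normalises $\Gamma_0(\n)$. The main obstacle is the bookkeeping in the first step. One must check that the correct quotient is exactly the order-$2$ group generated by $J$ for all five fields, especially $d=1,3$ where $u$ has order $4$ or $6$. One must also check that the right-action convention $f(JXJ^{-1})J$ matches the conjugation action on cohomology for the paper's right-module conventions.
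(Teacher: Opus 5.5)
Your proposal is correct and follows the paper's argument: inflation--restriction for the index-$2$ normal subgroup, where $H^1$ and $H^2$ of $\Z/2\Z$ vanish on $\F_p$-modules for $p>2$; you simply run it at level $\Gamma_0(\n)$ instead of full level, and you make explicit that the star operator is the conjugation action of $J$, which the paper leaves implicit. One harmless slip: with the paper's convention $\omega=\frac{1+\sqrt{-3}}{2}$ one has $-\omega=\omega^4=(\omega^2)^2$, so for $d=3$ the nontrivial class in $\OO_K^\times/(\OO_K^\times)^2$ is that of $-1$ (or $\omega$), not $-\omega$ --- but your argument only uses that $u=\det J$ is a non-square, which does hold.
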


\begin{proof}
  We have a short exact sequence
  \begin{equation}
    0 \to \PSL_2(\OO_K) \to \PGL_2(\OO_K) \xrightarrow{\det} \OO_K^{\times}/ (\OO_K^{\times})^{2} \cong \Z/2\Z \to 0,
  \end{equation}
  where the cokernel is generated by $u$. 
  The corresponding inflation-restriction sequence reads
  \begin{equation}
    \label{eq:25}
    0 \to H^{1}(\Z/2\Z, \F_p) \to H^{1}(\PGL_2(\OO_K),\F_p) \to H^{1}(\PSL_2(\OO_K),\F_p)^{*} \to H^{2}(\Z/2\Z, \F_p).
  \end{equation}
  When $p > 2$, the first and last terms vanish, proving the claim.
\end{proof}

One may think of the star operator as a ``Hecke operator at infinity''.

As we represent cocycles as tuples indexed by the generators of the group, it is more efficient to compute the $\PSL_2$ cohomology and cut down using the star operator, than it is to compute the $\PGL_2$ cohomology directly. 





\subsection{Computing degeneracy maps}
\label{sec:degeneracy-maps}

Given coprime ideals $\n$ and $\mf{r}$, there are inclusions $\alpha_{i} \colon \Gamma_0(\mf{nr}) \to \Gamma_0(\n)$ indexed by the divisors $(d_i)$ of $\mf{r}$.
Explicitly, these are given by
\begin{equation*}
  \alpha_i(\gamma) = \begin{psmallmatrix}
    d_i & 0 \\ 0 & 1
  \end{psmallmatrix} \gamma \begin{psmallmatrix}
    d_i & 0 \\ 0 & 1
  \end{psmallmatrix}^{-1}. 
\end{equation*}
These induce maps in cohomology
\begin{equation*}
  \alpha_i \colon H^1(\Gamma_0(\n), V) \to H^1(\Gamma_0(\mf{nr}), V^{\alpha_i}),
\end{equation*}
called \emph{degeneracy maps}.
Here $V^{\alpha_i}$ denotes the $\Gamma_0(\mf{nr})$-module $V$ with the twisted action
\begin{equation}
  \label{eq:28}
  \gamma \cdot v = \alpha_i (\gamma) \cdot v.
\end{equation}


The degeneracy maps can be computed at the level of cohomology of $\Gamma_d$ via the inverse Shapiro map, as in the following diagram.
\begin{equation}
  \label{eq:shapiro-diagram}
\begin{tikzcd}
    {H^1(\Gamma_d,\CoInd{\Gamma_d}{\Gamma_0(\n)}{V})} & {H^1(\Gamma_d,\CoInd{\Gamma_d}{\Gamma_0(\mf{nr})}{V})} \\
    {H^1(\Gamma_0(\mf{r}),\CoInd{\Gamma_d}{\Gamma_0(\n)}{V})} & {H^1(\Gamma_d,\CoInd{\Gamma_d}{\Gamma_0(\mf{r})}{ \CoInd{\Gamma_d}{\Gamma_0(\n)}{V} })}
    \arrow["{\alpha_i^*}", dashed, from=1-1, to=1-2]
    \arrow["\res"', from=1-1, to=2-1]
    \arrow["{\Sh^{-1}}"', from=2-1, to=2-2]
    \arrow["{\sim}"' labl, from=2-2, to=1-2]
  \end{tikzcd}
\end{equation}

\begin{remark}
  In principle, one can compute the degeneracy maps by passing to full level $\Gamma_d$ and using the word decomposition.
  For our purposes, however, this is too computationally expensive.
  Instead, we will give a formula based on an explicit inverse of the Shapiro morphism.
  This lets us compute degeneracy maps with $[\Gamma_0(\mf{n}): \Gamma_0(\mf{nr})]$ word decompositions instead of $[\Gamma_d:\Gamma_0(\mf{nr})]$, giving large computational savings when $\mf{r}$ has large norm.
  A different version of an explicit Shapiro inverse, which is not well-adapted to our setup, may be found in \cite{zavarnitsine2025}.
\end{remark}

To give an explicit formula for the level-raising maps $\alpha_i$, we abstract the setup slightly.
Fix finite index subgroups $H' \subset H \subset G$, along with sets of coset representatives $T$ and $R$ satisfying
\begin{equation}
  \label{eq:2}
G = \bigsqcup_{t \in T} H t \quad \text{and} \quad H = \bigsqcup_{r \in R} H' r;
\end{equation}
in particular,
\begin{equation}
  \label{eq:5}
G = \bigsqcup_{(r,t) \in R \times T} H'r t.
\end{equation}

We make the important assumption that the action of $H$ on $R$ extends to $G$.

\begin{lemma}\label{lemma:K-group}
  A right $G$-action on $R = H'\backslash H$ extending the natural $H$-action is equivalent to the choice of a subgroup $K \subset G$ such that $H \cap K = H'$, $K H = G$ and $R \cong K \backslash G$.
\end{lemma}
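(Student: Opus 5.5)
The statement is an instance of the correspondence between transitive $G$-sets and conjugacy classes of subgroups, via point stabilisers. I will construct the two directions explicitly and check that they are mutually inverse up to the obvious identifications.

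\emph{From an action to a subgroup.} Suppose the right action of $H$ on $R = H'\backslash H$ by right multiplication extends to a right action of $G$. Let $x_0 \in R$ be the coset $H'$, and set $K \defeq \Stab_G(x_0)$. Then $H \cap K = \Stab_H(x_0) = H'$, because the $G$-action restricts to the given $H$-action. The $H$-action on $R$ is already transitive, so the $G$-action is transitive too. Hence the orbit map $K\backslash G \to R$, $Kg \mapsto x_0 \cdot g$, is a $G$-equivariant bijection, giving $R \cong K\backslash G$. Finally, for $g \in G$, transitivity of $H$ on $R$ gives some $h \in H$ with $x_0 \cdot g = x_0 \cdot h$. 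Then $gh^{-1} \in K$, so $g \in KH$, and therefore $G = KH$.

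\emph{From a subgroup to an action.} Conversely, take $K \subseteq G$ with $H \cap K = H'$ and $KH = G$. Consider the map $\iota\colon H'\backslash H \to K\backslash G$, $H'h \mapsto Kh$. It is well defined since $H' \subseteq K$. It is injective: if $Kh = Kh'$ with $h,h' \in H$, then $h'h^{-1} \in K \cap H = H'$. It is surjective: every $g \in G$ can be written $g = kh$ with $k \in K$ and $h \in H$, so $Kg = Kh$. Moreover, $\iota$ is $H$-equivariant for right multiplication. Transporting the natural right $G$-action on $K\backslash G$ along $\iota$ therefore gives a $G$-action on $R$ extending the $H$-action.

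\emph{Mutual inverses.} Starting from $K$, the stabiliser of $x_0 = H'$ under the transported action is the stabiliser of the coset $K \in K\backslash G$, which is $K$ itself. Starting from an action, the orbit map above identifies $R$ with $K\backslash G$ $G$-equivariantly, and this identification sends $H'h$ to $Kh$. So the transported action recovers the original one, and the two constructions are inverse.

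The main point needing care is what ``equivalent'' should mean. The subgroup $K$ depends on the chosen base point, but the base point $H' \in R$ is canonical, and this is what makes the correspondence a genuine bijection rather than one up to conjugacy. I would state this explicitly. Everything else is routine checking of stabilisers.
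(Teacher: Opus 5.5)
Your proof is correct and takes the same approach as the paper: you take $K$ to be the stabiliser of the base coset $H'$, read off $H \cap K = H'$, $R \cong K\backslash G$ and $G = KH$ from transitivity, and for the converse transport the action along $H'h \mapsto Kh$. Your check that this map is a well-defined $H$-equivariant bijection, and that the two constructions are mutually inverse, fills in details the paper leaves implicit.
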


\begin{proof}
  Suppose $G$ acts on $R$, and define $K \defeq \Stab_{G}(H'e)$. Transitivity immediately implies that $R \cong K \backslash G$, and evidently $H \cap K = H'$. For any $g \in G$, write $H'g = H' h$ for some $h \in H$, so $h g^{-1} \in K$. Choosing $k$ such that $h g^{-1} = k^{-1}$, we get $g = k h$, so $G = KH$.

  For the converse, simply transfer the right action from $K \backslash G$ to $R$. 
\end{proof}

Let $(g,r) \mapsto \sigma_{g}(r)$ be the {\em inverse} of the action map, so that $\sigma_{g}(r) g r^{-1} \in \Stab_{G}(H' e)$ for all $r \in R$.
In particular, this implies that $\sigma_{g_{1}g_{2}}(r) = \sigma_{g_{1}}(\sigma_{g_{2}}(r))$.
We denote by $k[H]$ the group ring of $H$ over a field $k$, and let $V$ be a right $k[H]$-module.
Suppose $\tilde{G}$ is a group containing $G$, and that $\alpha : H' \to H$ is given by conjugation by an element $\gamma$ in $\tilde{G}$ such that $\gamma H' \gamma^{-1} \subset H$ and $\gamma K \gamma^{-1} \subset G$.
We furthermore assume that the action of $H$ on $V$ extends to an action of $\langle G,\gamma \rangle$.

By Lemma \ref{lemma:K-group}, there is a $G$-equivariant bijection 
\begin{equation}
  H' \backslash G \to (K \backslash G) \times (H \backslash G),  \quad  H'g \mapsto (Kg, Hg),
\end{equation}
giving an identification $\CoInd{G}{H'}{V} \cong k[R] \otimes M$, where $M\defeq \CoInd{G}{H}{V}$.
This isomorphism respects the $G$-action if we set 
\begin{equation}\label{eq:weird-action}
  (r \otimes m) \cdot g = \sigma_g^{-1}(r) \otimes (m \cdot g),
\end{equation}
for $r \in R$, $m \in M$, and $g \in G$.
Furthermore, we define an action of $\gamma$ on $M$ by
\begin{equation}
  (Ht \otimes v) \cdot \gamma \defeq (Ht \cdot \gamma) \otimes (v \cdot \gamma).
\end{equation}

Given a cocycle $f \in Z^{1}(G, M)$, we may now define $\Phi(f) : G \to k[R] \otimes M$ by 
\begin{equation}
  \label{eq:7}
\Phi(f) (g) = \sum_{r \in R } r \otimes \left(f(\Ad(\gamma)(\sigma_{g}(r) g r^{-1})) \cdot \gamma r\right),
\end{equation}
where $\Ad(\gamma) (g) \defeq \gamma g \gamma^{-1}$.

\begin{proposition}
  Fix a cocycle $f \in Z^{1}(G,\CoInd{G}{H}{V})$.
  \begin{enumerate}
  \item $\Phi(f) \in Z^{1}(G, \CoInd{G}{H'}{V})$.
  \item If $f$ is a coboundary, then so is $\Phi(f)$. 
  \end{enumerate}
\end{proposition}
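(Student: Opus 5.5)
We verify both parts directly from the formula \eqref{eq:7}, using the right-cocycle convention $\phi(gh) = \phi(g)h + \phi(h)$ fixed in Section \ref{sec:computing-cohomology}. The key algebraic input is the multiplicativity of the elements $\kappa_g(r) \defeq \sigma_g(r)\, g\, r^{-1}$. By construction $\kappa_g(r) \in K = \Stab_G(H'e)$. Moreover, from $\sigma_{g_1g_2}(r) = \sigma_{g_1}(\sigma_{g_2}(r))$ we get
\begin{equation*}
  \kappa_{g_1g_2}(r) = \sigma_{g_1}(\sigma_{g_2}(r))\, g_1\, \sigma_{g_2}(r)^{-1} \cdot \sigma_{g_2}(r)\, g_2\, r^{-1} = \kappa_{g_1}(\sigma_{g_2}(r))\, \kappa_{g_2}(r).
\end{equation*}
Since $\gamma K \gamma^{-1} \subset G$, the elements $\Ad(\gamma)(\kappa_g(r))$ lie in $G$, so $f$ may be evaluated on them. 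Applying $\Ad(\gamma)$ and the cocycle relation for $f$ then gives
\begin{equation*}
  f(\Ad(\gamma)\kappa_{g_1g_2}(r)) = f(\Ad(\gamma)\kappa_{g_1}(\sigma_{g_2} r)) \cdot \gamma\, \kappa_{g_2}(r)\, \gamma^{-1} + f(\Ad(\gamma)\kappa_{g_2}(r)).
\end{equation*}

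For part (1), we expand $\Phi(f)(g_1g_2)$ with this identity and multiply each term on the right by $\gamma r$. The second summand becomes exactly the $r$-th term of $\Phi(f)(g_2)$. In the first summand, the factor $\gamma\kappa_{g_2}(r)\gamma^{-1}\gamma r = \gamma\,\sigma_{g_2}(r)\, g_2$ appears. Reindexing by $s = \sigma_{g_2}(r)$, which is legitimate because $\sigma_{g_2}$ is a bijection of $R$, this summand becomes
\begin{equation*}
  \sum_s \sigma_{g_2}^{-1}(s) \otimes f(\Ad(\gamma)\kappa_{g_1}(s))\cdot \gamma s\, g_2.
\end{equation*}
By the twisted action \eqref{eq:weird-action}, this is $\Phi(f)(g_1)\cdot g_2$. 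Hence $\Phi(f)(g_1g_2) = \Phi(f)(g_1)g_2 + \Phi(f)(g_2)$, which proves (1).

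The main subtlety is bookkeeping: one must check that the right action of $\gamma$ and of $G$ on $M = \CoInd{G}{H}{V}$ is associative in the combined group $\langle G, \gamma\rangle$. This is what makes the expression $(m\cdot \gamma)\cdot(\gamma^{-1} x \gamma) = (m\cdot x)\cdot\gamma$ meaningful, and it needs both that $H$ acts on $V$ through $\langle G, \gamma\rangle$ and that $\gamma$ acts compatibly on the cosets $Ht$. We will verify this compatibility first, as a separate lemma, before carrying out the computation above.

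For part (2), write $f(x) = m\cdot x - m$ for some $m \in M$. Then
\begin{equation*}
  f(\Ad(\gamma)\kappa_g(r))\cdot\gamma r = m\gamma\,\sigma_g(r)\, g - m\gamma r.
\end{equation*}
Set $n \defeq \sum_r r \otimes m\gamma r$. Reindexing the first sum by $s = \sigma_g(r)$ as in part (1) shows
\begin{equation*}
  \Phi(f)(g) = \sum_s \sigma_g^{-1}(s) \otimes (m\gamma s)\cdot g - n = n\cdot g - n,
\end{equation*}
which is a coboundary. This proves (2).
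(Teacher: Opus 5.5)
Your proof is correct and is essentially the paper's argument. The paper starts from $\Phi(f)(g_1)\cdot g_2$ instead, reindexes by $\sigma_{g_2}$, and splits $\Ad(\gamma)\kappa_{g_1}(\sigma_{g_2}(r)) = AB$ with $A = \Ad(\gamma)\kappa_{g_1g_2}(r)$ and $B = \Ad(\gamma)\kappa_{g_2}(r)^{-1}$. That is your multiplicativity identity read backwards, which is why the paper needs the extra identity $f(B) = -f(B^{-1})\cdot B$. Part (2) uses the same element $w = \sum_r r\otimes(m\cdot\gamma r)$ as your $n$.

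The compatibility of the $\gamma$- and $G$-actions on $M$ that you flag is also used silently in the paper, in the step $B\cdot\gamma\sigma_{g_2}(r)g_2 = \gamma r$, and is treated there as part of the standing setup. In the application it holds because both $G$ and $\gamma$ act by right multiplication on $\PP^1(\OO_K/\mf{n})$ after reduction mod $\mf{n}$.
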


\begin{proof}
  (1) We need to verify the cocycle condition for $\Phi(f)$, namely $\Phi(f) (g_{1}g_{2}) = \Phi(f)(g_{1})\cdot  g_{2} + \Phi(f)(g_{2})$ for $g_{1},g_{2} \in G$.
  By Equation \ref{eq:weird-action}, notice that
  \begin{align}
  \label{eq:10}
    \Phi(f)(g_{1}) \cdot g_{2} &= \sum_{r} \sigma_{g_{2}}^{-1}(r) \otimes \left(f(\gamma\sigma_{g_{1}}(r) g_{1} r^{-1} \gamma^{-1}) \cdot \gamma r g_{2}\right) \\
  &= \sum_{r} r\otimes \left(f(\gamma \sigma_{g_{1}}(\sigma_{g_{2}}(r)) g_{1} \sigma_{g_{2}}(r)^{-1} \gamma^{-1}) \cdot \gamma \sigma_{g_{2}}(r) g_{2}\right),
\end{align}
by reindexing the sum under the map $r \mapsto \sigma_{g_{2}}(r)$.
Next we manipulate the inner expressions; write
\begin{align}
  \label{eq:11}
  \gamma \sigma_{g_{1}}(\sigma_{g_{2}}(r)) &g_{1} \sigma_{g_{2}}(r)^{-1}\gamma^{-1} = \\
   &\underbrace{\left(\gamma \sigma_{g_{1}g_{2}}(r) g_{1}g_{2}r^{-1}\gamma^{-1}\right)}_{\eqdef A} \underbrace{\left(  \gamma r g_{2}^{-1} \sigma_{g_{2}}(r)^{-1}\gamma^{-1}\right)}_{\eqdef B}.
\end{align}
By the cocycle identity and the fact that $f$ satisfies $f(B) = - f(B^{-1}) \cdot B$,
we get
\begin{align}
  f(AB) &= f(A) \cdot B + f(B) \\
  &= (f(A) - f(B^{-1})) \cdot B.
\end{align}
Now note that
\begin{equation}
  \label{eq:8}
B \cdot \gamma \sigma_{g_{2}}(r) g_{2} = \gamma r,
\end{equation}
and that $B^{-1} = \gamma \sigma_{g_{2}}(r)g_{2} r^{-1} \gamma^{-1}$, so
\begin{align}
  \Phi(f) (g_1) \cdot g_2
  =& \sum_{r} r \otimes (f(A) - f(B^{-1})) \cdot \gamma r \\
  =& \sum_{r} r \otimes f(\gamma \sigma_{g_{1}g_{2}}(r) g_{1}g_{2}r^{-1}\gamma^{-1}) \cdot \gamma r \\
  &-
    \sum_{r} r \otimes f(\gamma \sigma_{g_{2}}(r)g_{2} r^{-1} \gamma^{-1}) \cdot \gamma r \\
  =& \Phi(f) (g_1g_2) - \Phi(f) (g_2).
\end{align}
Rearranging, we find
\begin{equation}
  \label{eq:12}
\Phi(f)(g_{1}g_{2}) = \Phi(f)(g_{1}) \cdot g_{2} + \Phi(f)(g_{2}),
\end{equation}
as required.

(2) If $f(g) = m \cdot g - m$ for some $m \in \CoInd{G}{H}{V}$, define $w = \sum_{r} r \otimes (m \cdot \gamma r)$.
Then, unfolding definitions and applying Equation \ref{eq:weird-action}, we find

\begin{align}
  \label{eq:13}
  \Phi(f)(g)
  &= \sum_{r} r \otimes \left(f(\gamma \sigma_{g}(r) g r^{-1} \gamma^{-1}) \cdot \gamma r\right) \\
  &= \sum_{r} r \otimes \left(m \cdot \gamma \sigma_{g}(r) g r^{-1} \gamma^{-1} \cdot \gamma r - m \cdot \gamma r\right) \\
  &= \sum_{r} r \otimes \left(m \cdot \gamma \sigma_{g}(r) g - m \cdot \gamma r\right) \\
  &= \Big(\sum_{r}  \sigma_{g}^{-1}(r) \otimes (m \cdot \gamma r g )\Big) - \sum_{r} r \otimes (m \cdot \gamma r)\\
  &= w \cdot g - w,
\end{align}
which proves the second claim.
\end{proof}

Consequently, $\Phi$ induces a map on cohomology.

\begin{theorem}
  The following diagram commutes.

  \begin{equation}
    \label{eq:9}
\begin{tikzcd}
{H^1(H,V) } \arrow[r, "\alpha^\ast"]                                & {H^1(H',V^{\alpha})}                                        \\
{H^1(G,\CoInd{G}{H}{V})} \arrow[r, "\Phi"] \arrow[u, "\mathrm{Sh}_{H}"] & {H^1(G,\CoInd{G}{H'}{V})} \arrow[u, "\mathrm{Sh}_{H'}"]
\end{tikzcd}
  \end{equation}
\end{theorem}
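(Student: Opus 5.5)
It suffices to check commutativity on cocycles: for $f \in Z^{1}(G,\CoInd{G}{H}{V})$ I will show that $\mathrm{Sh}_{H'}(\Phi(f))$ and $\alpha^{\ast}(\mathrm{Sh}_{H}(f))$ agree as functions on $H'$, possibly up to a coboundary. Recall that $\mathrm{Sh}_H(f)(h) = f(h)(1_G)$ for $h \in H$. Under the identification $\CoInd{G}{H'}{V} \cong k[R]\otimes M$, evaluation at $1_G$ corresponds to taking the component of the tensor indexed by the coset $H'e \in R$, and then evaluating the $M$-factor at $1_G$. Throughout I take the representative of $H'e$ in $R$ to be $e$ itself. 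With this convention, for $h' \in H'$,
\begin{equation*}
  \mathrm{Sh}_{H'}(\Phi(f))(h') = \big(f(\gamma\,\sigma_{h'}(e)\,h'\,\gamma^{-1}) \cdot \gamma\big)(1_G).
\end{equation*}

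The key step is to compute $\sigma_{h'}(e)$. Since $h' \in H' \subset K = \Stab_G(H'e)$, the element $h'$ fixes the base point $e$ of $R$. Hence $\sigma_{h'}(e) = e$, because $\sigma$ is the inverse of the action map. The expression therefore collapses to $\big(f(\gamma h' \gamma^{-1})\cdot \gamma\big)(1_G)$. On the other side, $\alpha^{\ast}(\mathrm{Sh}_H f)(h') = f(\gamma h'\gamma^{-1})(1_G)$, which makes sense because $\gamma H'\gamma^{-1} \subset H$. It remains to compare $(m\cdot\gamma)(1_G)$ with $m(1_G)$ for $m \in M$, together with the twist in $V^{\alpha}$.

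For this I will unwind the definition of the $\gamma$-action on $M$, namely $(Ht\otimes v)\cdot\gamma = (Ht\gamma)\otimes(v\gamma)$. Here $M$ is identified with $\bigoplus_{t \in T} V$ indexed by cosets, and evaluation at $1_G$ picks out the component at the coset $H$. Since $\gamma$ normalises $H$ in the relevant sense (conjugation by $\gamma$ carries $H'$ into $H$), the coset $H$ is sent to the coset $H\gamma$. I need to check that the $V$-action by $\gamma$ exactly accounts for the twisted action on $V^{\alpha}$, and that the right coset convention matches. I expect this bookkeeping to be the main obstacle: making the identification $\CoInd{G}{H}{V}\cong\bigoplus V$ consistent with the ad hoc $\gamma$-action, and possibly having to choose $\gamma$ so that $H\gamma$ corresponds to the identity coset.

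If this comparison only holds up to a coboundary, I will use part (2) of the preceding proposition and the fact that $\mathrm{Sh}$ is an isomorphism on cohomology. Since everything is linear, equality at the level of cocycles then gives commutativity of the diagram.
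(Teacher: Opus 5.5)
Your argument is the paper's. Its proof is the one-line cocycle identity $\alpha^{*}(\Sh_H f)(h') = f_1(\gamma h'\gamma^{-1}) = \Sh_{H'}\Phi(f)(h')$, read off from the $r=e$, identity-coset component of $\Phi(f)(h')$. Your observation that $\sigma_{h'}(e)=e$, because $H' = H\cap K\subset K$, is exactly the unstated reason the first factor collapses.

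The step you leave open is one the paper's proof also takes for granted. It closes on the nose, with no coboundary involved.

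\begin{itemize}
\item The relevant fact is not that $\gamma$ normalises $H$. It need not, and $\gamma H'\gamma^{-1}\subset H$ is beside the point. What matters is that the extended action of $\gamma$ on $H\backslash G$ fixes the identity coset. In the application, $\gamma=\mathrm{diag}(d_i,1)$ with $d_i$ prime to $\mf{n}$ fixes the point of $\P^1(\OO_K/\mf{n})$ corresponding to $H$. Hence $(m\cdot\gamma)(1_G)=m(1_G)\cdot\gamma$, and no choice of $\gamma$ is required.
\item The leftover factor $\cdot\gamma$ is then precisely the $H'$-equivariant isomorphism $V^{\alpha}\to V$, $v\mapsto v\cdot\gamma$. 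Indeed $(v\cdot\gamma h'\gamma^{-1})\cdot\gamma=(v\cdot\gamma)\cdot h'$. This is the identification implicit in letting $\Sh_{H'}$ land in $H^1(H',V^{\alpha})$. For $V=\F_p$ with trivial action it is the identity.
\end{itemize}

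Finally, drop the fallback. Part (2) of the preceding proposition only shows that $\Phi$ sends coboundaries to coboundaries, i.e.\ that $\Phi$ is well defined on cohomology. It says nothing about whether $\Sh_{H'}\Phi(f)-\alpha^{*}\Sh_H(f)$ is a coboundary, so it could not have rescued a mismatch in any case.
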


\begin{proof}
  Recall that the map $\Sh_{H}$ is induced by the map $\sum_{t} t \otimes f_{t}(g) \mapsto f_{1}(h)$.
  We then compute
  \begin{equation}
    \label{eq:14}
\alpha^{*} (\Sh_{H}(f)) (h') = f_{1}(\gamma h' \gamma^{-1}) = 
\Sh_{H'} \Phi(f) (h'),
\end{equation}
as required.
\end{proof}

\begin{remark}
  To give a formula for the maps $\alpha_i^{*}$ from Equation \ref{eq:shapiro-diagram}, take $H = \Gamma_{0}(\mf{n})$,
  $H' = \Gamma_{0}(\mf{n r})$, $K = \Gamma_{0}(\mf{r})$, and $V =\F_p$.
  The required extension of the $G$-action to $H \backslash G \cong \PP^{1}(\OO_K/\mf{n})$ comes from reducing modulo $\mf{n}$ and right-multiplying; the action of the matrices $\gamma =
   \begin{psmallmatrix}
    d_i & 0 \\ 0 & 1
  \end{psmallmatrix}
  $
  is defined similarly.
  
  Using the decomposition from Equation \ref{eq:24}, we can use the components of an explicit cocycle $f$ of level $\Gamma_0(\mf{n})$ to quickly compute $\alpha_i^{*}(f)$ as an element of $H^{1}(\PSL_2(\OO_K), \CoInd{\Gamma_d}{\Gamma_0(\mf{nr})}{\F_p})$. 
\end{remark}

\subsection{Verifying lifts}
\label{sec:verifying-lifts}
Given a mod $p$ Bianchi eigenform, one can search for lifts by comparing some finite number of eigenvalues to others appearing in characteristic $0$. However, matching at finitely many eigenvalues does not guarantee that \textit{all} eigenvalues will match. We describe a method to verify that a probable lift actually is a lift, using degeneracy maps. 

Let $f \in H^1(\Gamma_0(\n),\F_p)$, and suppose we have $F \in H^1(\Gamma_0(\mf{nr}),\Z)$ whose eigenvalues match those of $f$ modulo $p$, up to some finite bound. For each divisor $d$ of $\mf{r}$, we have a degeneracy map $\alpha_d \colon H^1(\Gamma_0(\n),\F_p) \to H^1(\Gamma_0(\mf{nr}),\F_p)$, and each map is Hecke equivariant outside $\mf{nr}$. Alongside this, we have the ``reduction mod $p$'' map $\overline{{}\cdot {}} \colon H^1(\Gamma_0(\mf{nr}),\Z) \to H^1(\Gamma_0(\mf{nr}),\F_p)$, also Hecke equivariant. If 
\begin{equation*}
	\overline{F} \in \mathrm{span}_{\F_p}\left\{ \alpha_d(f) \colon  (d) \mid \mf{r} \right\},
\end{equation*}
then the Hecke equivariance immediately implies that 
\begin{equation*}
	a_{\mf{q}}(F) \equiv a_{\mf{q}}(f) \Mod{p}
\end{equation*}
for all $\mf{q} \nmid \mf{n r}p$.
When $f$ or $F$ have irrational eigenvalues, the same arguments hold after suitably replacing $\Z$, $p$ and $\F_p$ with $\OO$ the ring of integers of a number field, $\p$ a prime ideal in $\OO$ over the characteristic, and $\F_{\p} = \OO/\p$.
For the rest of the paper we will write equalities such as $\overline{F} = \alpha_1(f) - \alpha_{d}(f)$, which are only literally true up to a constant multiple, depending on the scaling of the cocycles involved.
This constant does not affect the congruence implied by the equality, and so we ignore it.

When $f$ is ethereal, it is associated to a torsion class $\tau_f \in H^2(\Gamma_0(\n),\Z)[p]$, and the existence of such a lift corresponds to finding some linear combination $\beta = \sum x_d \alpha_d$ of degeneracy maps such that $\beta(\tau_f) = 0$. This can be seen by the commutativity, and exactness of the rows, of the diagram 
\[\begin{tikzcd}[column sep=0.7cm]
	0 & {H^1(\Gamma_0(\n),\Z)\otimes \F_p} & {H^1(\Gamma_0(\n),\F_p)} & {H^2(\Gamma_0(\n),\Z)[p]} & 0 \\
	0 & {H^1(\Gamma_0(\mf{nr}),\Z)\otimes \F_p} & {H^1(\Gamma_0(\mf{nr}),\F_p)} & {H^2(\Gamma_0(\mf{nr}),\Z)[p]} & 0
	\arrow[from=1-1, to=1-2 ]
	\arrow["{\mathrm{red}_p}", from=1-2, to=1-3]
	\arrow["\beta", from=1-2, to=2-2]
	\arrow[from=1-3, to=1-4]
	\arrow["\beta", from=1-3, to=2-3]
	\arrow[from=1-4, to=1-5]
	\arrow["\beta", from=1-4, to=2-4]
	\arrow[from=2-1, to=2-2 ]
	\arrow["{\mathrm{red}_p}", from=2-2, to=2-3]
	\arrow[from=2-3, to=2-4]
	\arrow[from=2-4, to=2-5]
\end{tikzcd}\]
formed from two copies of Equation \ref{eq:17}. 

Not all probable lifts can be verified in this way, as it requires the mod $p$ reduction of the lifting form $F$ to be in the image of the degeneracy maps from level $\Gamma_0(\mf{n})$.
This situation is complementary to Proposition \ref{prop:level-raising} where the existence of a mod $p$ ``newform'' is proved, but no guarantee of lifting is made.

\begin{example}
	Over $K=\Q(\sqrt{-1})$, at level $\n = (4+9i)$ (label \texttt{97.1}) there is an ethereal form $f$ in the mod $5$ cohomology. Adding $\p_{2} = (1+i)$ (label \texttt{2.1}) to the level, we find a lift to characteristic $0$ at level $\n \p_{2}$, associated to the elliptic curve 
	\begin{equation*}
		E \colon y^2 + xy + iy = x^3 + (i-1)x^2 - (4 + 4i)x - 4i
	\end{equation*}
	with LMFDB label \LMRefDB{https://www.lmfdb.org/EllipticCurve/2.0.4.1/194.1/a/1}{2.0.4.1-194.1-a1}. The $\p_2$ eigenvalue of $f$ is $2$, with $a_{\p_2}^2 - (1+\norm(\p_2))^2 \equiv 0 \Mod{5}$, and so $f$ satisfies the level-raising condition at $\p_2$. 
	
	Using LMFDB data, we find further probable lifts of $f$ to characteristic $0$.
        Let $\p_{5} = (1-2i)$, $\p_{13} = (3+2i)$, $\p_{17} = (1+4i)$, and $\p_{1021} = (11+30i)$ (labels \texttt{5.1}, \texttt{13.2}, \texttt{17.2} and \texttt{1021.2}).
        Then $f$ has probable lifts to characteristic $0$ at levels $\n \mf{d}$, with $\mf{d} \in \{ \p_{5}^2, \p_{2}\p_{5}^2, \p_{2}\p_{13}\p_{17}, \p_{1021} \}$.
        Computing Hecke eigenvalues, we can verify that the conclusion of Proposition \ref{prop:ethereal-ellcurve-steinberg} holds in each case:
	\begin{align*}
		a_{\p_{13}}^2 - (1+\norm(\p_{13}))^2 \equiv 1^2 - 14^2 \equiv 0 \Mod{5}, \\ 
		a_{\p_{17}}^2 - (1+\norm(\p_{17}))^2 \equiv 3^2 - 18^2 \equiv 0 \Mod{5}, \\
		a_{\p_{1021}}^2 - (1+\norm(\p_{1021}))^2 \equiv 2^2 - 1022^2 \equiv 0 \Mod{5}, 
	\end{align*}
	with $\p_{5}$ dividing the characteristic. 
\end{example}
\begin{example}
  We give an example of a probable lift that is not in the image of the degeneracy maps, and so cannot be verified with this method.
  Over $K = \Q(\sqrt{-3})$, at level $(17)$ (label \texttt{289.1}), there is a mod $2$ ethereal form $f$.
  We find a probable lift to characteristic $0$ at level $(68)$ (label \texttt{4624.1}), after adding two copies of the prime $(2)$ to the level.
  The lift is one of a pair of Galois conjugate forms over $\Q(\sqrt{5})$.
  However, the image of the characteristic $0$ cohomology space in the mod $2$ space does not intersect the span of the degeneracy maps applied to $f$.
  That is, while the eigenvalues appear to match (up to norm $100$ at least), we cannot prove the lifting.
\end{example}

For a complete discussion of which probable lifts our computations validated at the level of cocycles, see Section \ref{sec:results-of-comps}.

\subsection{Remarks on feasibility of lifting}\label{sec:feasible}
Lifts of an ethereal form $f$ typically occur after adding primes to the level at which $f$ satisfies the level-raising condition, or those that divide the level or characteristic.
In each case, large primes present a difficulty, as the level of a potential lift ends up being larger, and more expensive to compute.

Large characteristics also pose another problem: primes for which $f$ satisfies the level-raising condition appear to become increasingly rare.
For example, over $\Q(i)$, the mod $37$ ethereal form at level $(12-13i)$ (label \texttt{313.2}) satisfies the level-raising condition at $17$ primes of norm at most $1000$.
Meanwhile, the ethereal form in the mod $137$ cohomology at level $(3+20i)$ (label \texttt{409.2}) has only two such primes with norm at most $1000$.

Heuristically, the residues of $1+\norm(\mf{q})$ should be uniformly distributed mod $p$, as should the eigenvalues $a_{\mf{q}}(f)$ of an ethereal form $f$, at least if $\rho_f$ is assumed to have full image, by Chebotarev.
Then $f$ should satisfy the level-raising condition at a prime $\mf{q}$ with probability about $\frac{2}{p}$.
Even when $f$ \textit{does} satisfy the level-raising condition at a prime, there is no guarantee that adding it to the level once will be enough to give a lift.
For eigenvalue systems like those associated to the mod $1481759$ ethereal form of Example \ref{example:large_prime_ethereal}, the computational feasibility of finding lifts to characteristic $0$ in this way seems vanishingly small.

\section{Results \& implementation details}
\label{sec:results}

In this section we describe how to implement the aforementioned algorithms and the resulting data computed.
The code may be found in \cite{combes2026}.

\subsection{Using LMFDB eigenvalue data}
\label{sec:using-lmfdb}

At the time of writing, the LMFDB stores data describing all rational newforms in $H^1(\Gamma_0(\n),\C)$ for $\mf{n}$ as tabulated in Table \ref{table:lmfdb-data}, including hundreds of eigenvalues for each eigenform.

\begin{table}[H]
  \begin{tabular}{cccc}
    $d$& $2, 7, 11$ & $1$ & $3$  \\ \midrule
    $\norm(\mf{n})$& $50{,}000$ & $100{,}000$ & $150{,}000$   \\
  \end{tabular}
  \caption{Upper bounds of levels for Bianchi eigenforms for $\Q(\sqrt{-d})$ in the LMFDB.}\label{table:lmfdb-data}
\end{table}

This data can be used to quickly search for lifts.
We compute the eigenvalues of some form $f \in H^1(\Gamma_0(\n),\F_{\p})$ up to some bound, say $\norm(\mf{q}) \leq 100$, which can be compared to those in the database appearing at levels $\mf{nr}$.
Probable lifts are those with eigenvalues matching modulo $p$ outside the support of $\mf{nr}p$, which can then often be checked explicitly by the method outlined in Section \ref{sec:verifying-lifts}. For more details on explicitly verified lifts in our data, see Section \ref{sec:results-of-comps}.

In practice, we compute the mod $p$ eigenvalue systems using the Fox calculus implemented in the \texttt{Magma} computer algebra system \cite{bosma1997}, and store the results in csv files.
These are then loaded into \texttt{SageMath} \cite{thesagedevelopers2022}, and compared with LMFDB \cite{lmfdb} eigenvalue systems fetched using David Roe's \href{https://github.com/roed314/lmfdb-lite}{\texttt{lmfdb-lite} library}.
Then we make another pass in \texttt{Magma}, looking for lifts with irrational Hecke eigenvalues using builtin functionality.
Finally, we call the code to verify the lifts in \texttt{Magma}, accessed via \texttt{SageMath}. Details of the lifts found this way can be found in Section \ref{sec:results-of-comps}.




Based on our extensive computations, we make the following (optimistic) predictions:

\begin{conjecture}\label{conj:existence}
  Let $p$ be a rational prime, and let $f$ be an ethereal mod $p$ Bianchi eigenform with trivial coefficients over a norm-Euclidean imaginary quadratic field $K$ of level $\mf{n}$.
  Then the form $f$ admits a higher-level characteristic $0$ lift.
\end{conjecture}
It would not be surprising if the conjecture, if true, holds for other fields, but these are beyond the scope of our current computations.

\begin{remark}
  For $\GL_2(\Q)$, \cite[Theorem 3]{khare1995} implies that adding $p^{2}$ to the level is always sufficient to find a lift.
    This does not appear to hold for Bianchi modular forms; for example, for $K = \Q(\sqrt{-1})$ there is a mod $3$ eigenvalue system of prime level \texttt{277.1} which does not lift to characteristic $0$ after adding the inert prime $3$ or $3^{2}$ to the level.
\end{remark}
We summarise our computational findings as follows:

\begin{theorem}\label{thm:results}
  Conjecture \ref{conj:existence} holds true for at least $315$ out of $401$
  mod $p$ ethereal forms of level $\Gamma_0 (\mf{n})$ with $\mf{n}$ of norm $\le 500$ and $2 < p < 20$, and for at least $1144$ out of $2608$ ethereal forms with $\mf{n}$ of norm $\leq 1000$ and no restriction on $p$.
\end{theorem}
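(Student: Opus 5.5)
The statement is a computational census, so the proof consists of an exhaustive enumeration plus a certificate for each form counted as lifting. First, for each $d \in \{1,2,3,7,11\}$ and each ideal $\mf{n} \subset \OO_K$ with $\norm(\mf{n}) \le 1000$, I will compute $H_1(\Gamma_0(\mf{n}),\Z)$ as the abelianisation of a presentation obtained from the action on $\P^1(\OO_K/\mf{n})$. By the Corollary to the universal coefficient lemma, this yields the ethereal characteristics $p$. For each such $p$, I then compute $H^1(\Gamma_0(\mf{n}),\overline{\F}_p)$ via the Fox calculus applied to $\CoInd{\Gamma_d}{\Gamma_0(\mf{n})}{\F_p}$. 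I cut this down to the $\PGL_2$ part with the star operator when $p>2$, and decompose it under the Hecke operators $T_{\p}$ of (\ref{eq:hecke-op}).

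An eigenvalue system is counted as ethereal if it does not occur in the image of $\red_p$. I detect this by comparing the mod $p$ eigensystems with the reductions of the characteristic $0$ systems at the same level, and I discard cyclotomic-Eisenstein systems. Counting the remaining systems gives the denominators $401$ (for $\norm(\mf{n})\le 500$, $2<p<20$) and $2608$ (for $\norm(\mf{n})\le 1000$).

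Next, for each ethereal $f$ I search for candidate lifts. I restrict attention to multipliers $\mf{r}$ supported on level-raising primes (Definition \ref{def:level-raising}), on primes dividing $\mf{n}$, and on primes above $p$, guided by Proposition \ref{prop:level-raising} and Proposition \ref{prop:ethereal-ellcurve-steinberg}. I then compare the eigenvalues $a_{\mf{q}}(f)$ for $\norm(\mf{q})\le 100$ against LMFDB rational newforms at level $\mf{nr}$, and against Magma-computed newforms with irrational Hecke fields reduced modulo primes above $p$. Each match gives a probable lift.

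The key step is upgrading a probable lift to a proof in the sense of Definition \ref{def:liftable}. I use the criterion of Section \ref{sec:verifying-lifts}: compute $\alpha_d(f)$ for all divisors $d\mid\mf{r}$ using the explicit Shapiro-inverse formula $\Phi$ and the commuting diagram of the preceding theorem. Then I check by linear algebra over $\F_{\p}$ that $\overline{F}$ lies in the span of the $\alpha_d(f)$. Hecke equivariance away from $\mf{nr}$ then gives $a_{\mf{q}}(F)\equiv a_{\mf{q}}(f)$ for all $\mf{q}\nmid \mf{nr}p$. For $p\in\{2,3,5\}$ with $\F_p$-eigenvalues over $d\in\{1,2,3\}$, Proposition \ref{prop:small-prime-liftability} can be used instead, provided $\det\rho_f=\chi_{\mathrm{cyc}}$ can be checked. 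Only verified forms enter the counts $315$ and $1144$, which is why the statement says ``at least''.

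I expect two main obstacles. The first is that some genuine lifts have $\overline{F}$ outside the span of the degeneracy images, as in the $(17)$ example over $\Q(\sqrt{-3})$, so they cannot be certified and must be dropped from the count. The second is the cost of computing cocycles at level $\mf{nr}$. I address this with the $[\Gamma_0(\mf{n}):\Gamma_0(\mf{nr})]$ word decompositions of Section \ref{sec:degeneracy-maps} and the CRT splitting (\ref{eq:24}) instead of working at full level.
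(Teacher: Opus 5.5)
You describe the paper's pipeline (presentations via $\P^1(\OO_K/\mf{n})$, Fox calculus with the star operator, matching $a_{\mf{q}}$ for $\norm(\mf{q})\le 100$ against LMFDB and \texttt{Magma} data, then certifying via the span of the $\alpha_d(f)$). The gap is in the step meant to turn this into a proof of the stated numbers: you assert that only certified forms enter the counts, and that is not what the numbers count.

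Summing Table \ref{table:all-lifts}, the ethereal column gives $2608=213+717+108+574+996$. The ``Lifts'' column gives $1144=142+303+84+238+377$, and these are probable lifts detected by agreement of finitely many Hecke eigenvalues. Only $87+184+62+141+149=623$ of them are marked as verified at the level of cocycles by the method of Section \ref{sec:verifying-lifts}. So the procedure you describe, carried out as stated, certifies about $623$ forms, not $1144$.

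Proposition \ref{prop:small-prime-liftability} cannot close this difference. At least $(238-141)+(377-149)=325$ of the uncertified matches lie over $d\in\{7,11\}$, where the rank hypothesis on $X_0(15)(K)$ fails. Moreover, as your own citation of the level $(17)$ example over $\Q(\sqrt{-3})$ shows, some matches are out of reach of the span criterion altogether. The figure $315$ is presented the same way in the paper, with no indication that it consists of certified lifts.

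Your argument therefore establishes a strictly weaker count. To reach $315$ and $1144$ you must do what the paper does: accept numerical agreement of $a_{\mf{q}}(f)$ and $a_{\mf{q}}(F)$ for $\norm(\mf{q})\le 100$ as a lift. Upgrading that to liftability in the sense of Definition \ref{def:liftable} would need an effective Sturm- or Chebotarev-type bound for mod $p$ Bianchi eigensystems, which neither you nor the paper supplies.

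Your reading of ``at least'' is also off. In the paper it reflects the finite search space: the restricted multipliers $\mf{r}$, the LMFDB level bounds, and the Hecke fields actually searched. It is not a filter by verification.
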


When the ethereal mod $p$ eigensystem $f$ has no lift in the LMFDB, it is still possible that lifts exist, but have Hecke eigenvalues defined over a number field $E$ of degree $d>1$.
It is no longer feasible to check the characteristic $0$ systems of every level $\mf{nr}$ up to a bound.
Instead, we check only the ideals $\mf{r}$ where the divisors $\mf{q}$ satisfy the level-raising condition in Definition \ref{def:level-raising}.

We use efficient algorithms, using the Voronoï polyhedron and implemented by Dan Yasaki in the \texttt{Magma} computer algebra system, for computing characteristic $0$ Bianchi modular forms.
Another alternative is the specialised modular symbols code written by Cremona and his collaborators, which is the origin of the LMFDB data.

We use these as a quick first pass to look for Hecke eigenvalue systems valued in $E$ which appear to lift $f$, and then use the methods of Section \ref{sec:verifying-lifts} to verify.

\begin{remark}\label{rmk:hard-lifts}
  While we were able to lift only around $4$ in every $10$ ethereal forms across all fields up to norm $1000$, we nevertheless conjecture that lifts can always be found in general.
  The dataset consists of many eigenvalue systems in large characteristic, and these are expected to be very hard to lift, as noted in Section \ref{sec:feasible}.
  When we restrict to primes in the range $2$ to $20$, the proportion of lifts jumps significantly.
  The systems in that range for which we could not find lifts often have very few primes satisfying the level-raising condition, large prime levels, and inert primes over the characteristic.
\end{remark}

\begin{example}\label{ex:hard-system}
  There is an ethereal eigenvalue system in characteristic $19$ over $\Q(i)$ at level \texttt{397.1} for which no $a_{\p}$ satisfies the level-raising condition for $\norm(\p) \leq 100$.
  The prime $19$ is inert in $\Q(i)$, and so adds $361$ to the norm.
  The level is also prime.
  For these reasons, we expect this eigenvalue system to be hard to lift to characteristic $0$ at higher level.
  We have not found a lift.
\end{example}

\subsection{Examples}\label{sec:examples}
In this section, we record some of the more unusual examples discovered during our computation.
In tables where Hecke eigenvalues are recorded, prime labels marked with a dagger ($\dagger$) are those dividing the characteristic of an ethereal form or the level of a lift, where we do not expect the eigenvalues to agree.
For primes dividing the level, \texttt{Magma}'s builtin algorithms cannot compute the corresponding Hecke eigenvalue, so those entries are marked with a dash ($-$).
For mod $p$ modular forms, the eigenvalue $a_{\p}$ listed for primes over the level is the scalar by which $U_{\p}$ acts.
Similarly, if $\p \mid p$, we list the eigenvalue of the endomorphism defined by Equation \ref{eq:hecke-op}, but we stress that the arithmetic meaning of this is nebulous.

\begin{example}[Lift eigenvalues in a large field]
  For $K =\Q(\sqrt{-7})$, choose a root $\omega$ of $x^{2}-x+2$, and fix the level $\mf{n} = (18\omega + 1)$ (label $(\mathtt{667.2}) = (\mathtt{23.1}) \cdot (\mathtt{29.2})$).
  Then we find an ethereal mod $53$ Bianchi eigenform $f$ with eigenvalues in $\F_{53}$.
  The characteristic zero Bianchi space of level $\Gamma_0(\mf{nr})$ where $\mf{r} = (\sqrt{-7})$ contains a large Galois orbit of eigenforms.
  This is defined over the totally real number field $E$ with minimal polynomial
  \begin{equation}
    \label{eq:min-poly}
    x^7 + x^6 - 8x^5 - 6x^4 + 17x^3 + 8x^2 - 9x + 1,
  \end{equation}
  of discriminant $281 \cdot 3227041$, currently not in the LMFDB.
  The prime $53$ splits into two factors in $E$ with residue degrees $1$ and $6$.
  These are both principal; in fact, $E$ has class number $1$.
  Fixing a root $w$ of the polynomial in Equation \ref{eq:min-poly}, the residue degree $1$ prime $\mf{p}_{53}$ is generated by $w^6 - 8w^4 + w^3 + 14w^2 - 2$.
  We observe numerically that the Hecke eigenvalues of $f$ away from $\mf{n}$ agree with those of one of the eigenforms $F$ modulo $\mf{p}_{53}$, as indicated in Table \ref{table:irrational-lift-irrational-evals-2}.

  \renewcommand{\arraystretch}{1.3}
  \begin{table}[h!]
    \begin{tabular}{lccc}
      $\p$ & $a_{\p}(f)$ & $a_{\p}(F) \Mod{ \mf{p}_{53}}$ & $a_{\p}(F)$ \\
      \midrule
      \texttt{2.1}  & $48$     & $48$     & $-w^5 + 7w^3 - 2w^2 - 10w + 4$ \\
      
      \texttt{2.2}  & $43$  & $43$  & $w$ \\
      
      \texttt{7.1}${}^{\dagger}$  & $45$  & $-$  & $-$ \\
      
      \texttt{9.1}  & $12$     & $12$     & $w^6 - 8w^4 + w^3 + 15w^2 - 2w - 4$ \\
      \texttt{11.1} & $34$     & $34$     & $w^6 + w^5 - 8w^4 - 6w^3 + 15w^2 + 8w - 3$ \\
      
      \texttt{11.2} & $8$     & $8$     & $w^5 - 7w^3 + 3w^2 + 9w - 5$ \\
      
      \texttt{23.1}${}^{\dagger}$ & $52$   & $-$   & $-$ \\
      
      \texttt{23.2} & $36$  & $36$  & $-w^6 + 7w^4 - 3w^3 - 9w^2 + 9w - 4$ \\
      \texttt{25.1} & $41$     & $41$     & $-w^6 - w^5 + 7w^4 + 5w^3 - 10w^2 - 2w - 1$ \\
      \texttt{53.1}$^{\dagger}$ & $47$     & $47$     & $3w^6 + 4w^5 - 23w^4 - 25w^3 + 45w^2 + 39w - 18$ \\
      \texttt{53.2}$^{\dagger}$ & $46$     & $46$     & $-2w^6 + w^5 + 15w^4 - 9w^3 - 22w^2 + 10w - 5$ \\
    \end{tabular}

    \caption{A liftable ethereal mod $53$ form with lift with large Hecke field.}\label{table:irrational-lift-irrational-evals-2}
\end{table}
\end{example}

\begin{example}[Large characteristic]
  We have several examples of ethereal forms for relatively large primes.
  Over $\Q(\sqrt{-7})$, at prime level $\mathtt{641.1}$ there is an ethereal form in the mod $379$ cohomology.
  Adding the ideal \texttt{2.1} to the level, we find a characteristic $0$ lift with Hecke eigenvalues in the real cubic subfield $\Q(\zeta_7)^{+} \subset \Q(\zeta_7)$, where $\zeta_7$ is a primitive $7$-th root of unity.

  The rational prime $379$ splits completely in $\Q(\zeta_7)$, and we pick the reduction map $\Z[\zeta_7] \to \F_{379}$ defined by the prime $\mf{P} = (379, \zeta_7 - 195)$ of residue degree $1$.
  The first few eigenvalues of the characteristic $0$ lift can be found in Table \ref{table:irrational-lift-irrational-evals-379}.
  \begin{table}[h!]
    \renewcommand{\arraystretch}{1.3}
    \begin{tabular}{lccc}
      $\p$ & $a_{\p}(f)$ & $a_{\p}(F) \Mod{ \mf{P}}$ & $a_{\p}(F)$ \\
      \midrule
      \texttt{2.1}$^\dagger$ & $3$ & $-$   & $-$ \\
      \texttt{2.2}  & $45$   & $45$   & $\zeta_7^{5} + \zeta_7^{4} + \zeta_7^{3} + \zeta_7^{2}$ \\
      \texttt{7.1}  & $217$  & $217$  & $\zeta_7^{5} + \zeta_7^{2} - 2$ \\
      \texttt{9.1}  & $68$   & $68$   & $-3\zeta_7^{5} - 2\zeta_7^{4} - 2\zeta_7^{3} - 3\zeta_7^{2} - 2$ \\
      \texttt{11.1} & $217$  & $217$  & $\zeta_7^{5} + \zeta_7^{2} - 2$ \\
      \texttt{11.2} & $235$  & $235$  & $3\zeta_7^{4} + 3\zeta_7^{3} - 1$ \\
      \texttt{23.1} & $11$   & $11$   & $\zeta_7^{5} - \zeta_7^{4} - \zeta_7^{3} + \zeta_7^{2} - 3$ \\
      \texttt{23.2} & $171$  & $171$  & $-\zeta_7^{4} - \zeta_7^{3} - 3$ \\			
    \end{tabular}
    \caption{Lift of ethereal mod $379$ eigenvalue system to characteristic $0$.}\label{table:irrational-lift-irrational-evals-379}
  \end{table}
\end{example}

\begin{example}[Non-Steinberg lifts for $p=2$]\label{ex:failure-2}
  Consider the ethereal mod $2$ form $f$ at level $\mf{n} = (4 + \omega)$ (label \texttt{23.1}) over $K=\Q(\sqrt{-11})$, with $\omega = \frac{1+\sqrt{-11}}{2}$.
  In the LMFDB data, we find a total of $266$ elliptic curves whose Frobenius traces match the Hecke eigenvalues of $f$ mod $2$ at primes up to norm $100$.
  The minimal level one is $E$ as in \LMRefDB{https://www.lmfdb.org/ModularForm/GL2/ImaginaryQuadratic/2.0.11.1/92.1/a/}{2.0.11.1-92.1-a}, whose conductor is $(2) \cdot \mf{n}$.
  We find another curve $E'$ at level $(26-28\omega)$ (label \texttt{2300.5}), with lift factor $\mf{r} = (2) \p_5^2$ for $\p_5 = (2-\omega)$ of norm $5$. This has LMFDB label \LMRefDB{https://www.lmfdb.org/EllipticCurve/2.0.11.1/2300.5/a/1}{2.0.11.1-2300.5-a}.
  
  Notice that $a_{\p_5}(f) = 1$, so $f$ does not satisfy the Steinberg level-raising condition at $\p_5$.
  However, as predicted by Proposition \ref{prop:ethereal-ellcurve-steinberg}, the LMFDB lists $E'$ as a quadratic twist of $E$, with additive reduction of Kodaira type $I_0^{*}$ at $\p_5$.
  
  We verify the Bianchi modular form $F$ associated to $E'$ on the level of cocycles as
  \begin{equation*}
    \overline{F} = \sum_{\mf{d} \mid \mf{r}} \alpha_{\mf{d}}(f). 
  \end{equation*}
\end{example}

\begin{example}[Failure of multiplicity $1$]\label{eg:mult-2}
  For $K = \Q(\sqrt{-3})$ and $\mf{n}$ with LMFDB label $\mathtt{931.2} = (\mathtt{7.1})^{2} \cdot (\mathtt{19.2})$, we find a non-Eisenstein mod $7$ eigensystem $f$ which appears, for Hecke operators up to norm $1000$, to have multiplicity $2$ in $H^{1}(\Gamma_0(\mf{n}), \F_7)$.
  The first few eigenvalues are listed in Table \ref{tab:mult-2}.
  
  We verify that the eigenvalue system does not appear in a level dividing $\mf{n}$, and that the intersection of the $2$-dimensional eigenspace with the image of the mod $7$ reduction map is one-dimensional.
 
\begin{table}[ht]
\centering
\begin{tabular}{l ccccccccccc}
$\p$ & \texttt{3.1} & \texttt{4.1} & \texttt{7.1}$^{\dagger}$
               & \texttt{7.2}$^{\dagger}$ & \texttt{13.1} & \texttt{13.2}
               & \texttt{19.1} & \texttt{19.2}$^{\dagger}$ & \texttt{25.1}
               & \texttt{31.1} & \texttt{31.2} \\
\midrule
$a_{\p}$ & $6$ & $5$ & $0$ & $4$ & $2$ & $1$ & $1$ & $6$ & $6$ & $0$ & $5$ \\
\end{tabular}
\caption{Hecke eigenvalues in $\mathbb{F}_7$ of the multiplicity $2$ form of level \texttt{931.2} over $\mathbb{Q}(\sqrt{-3})$.}
\label{tab:mult-2}
\end{table}
Over $\Q(\sqrt{-11})$ and levels $\mathtt{873.1} = (\mathtt{3.1})^{2}\cdot (\mathtt{97.1})$ and $\mathtt{873.2} = (\mathtt{3.1})^{2} \cdot (\mathtt{97.2})$, we also find two mod $3$ multiplicity $2$ forms which lift, after adding another factor of $\mathtt{3.1}$ to the level, to the forms \LMRefDB{https://www.lmfdb.org/ModularForm/GL2/ImaginaryQuadratic/2.0.11.1/2619.1/a/}{2.0.11.1-2619.1-a} and \LMRefDB{https://www.lmfdb.org/ModularForm/GL2/ImaginaryQuadratic/2.0.11.1/2619.2/a/}{2.0.11.1-2619.2-a}.

We also find $203$ examples of ethereal eigenvalue systems where the generalised eigenspace is strictly larger than the ordinary eigenspace, including $7$ examples with $p>3$, which suggests that the origin is not torsion in $\SL_2(\OO_K)$.
The largest dimension of a generalised eigenspace we find is $5$, occurring for $4$ distinct mod $2$ eigenvalue systems.
\end{example}

\begin{example}[A dihedral mod $7$ system with no dihedral lifts]
  A remarkably sparse ethereal mod $7$ eigenvalue system $f$ appears for $K = \Q(\sqrt{-7})$ at level $\mathtt{512.1}=(w)^{9}$ where $w$ satisfies $w^{2} -w +2 =0$, see Table \ref{tab:dihedral-example}.
  In fact, $a_{\mf{q}}(f) =0$ for any $\mf{q}$ inert in the extension $L \defeq K(\sqrt{-w})$.
  It is therefore natural to expect that the associated Galois representation $\rho_f$ is the induction of a character from $L$.
  Denoting by $\mf{P}_2$ and $\mf{P}_7$ the unique primes above $(w)$ and $(\sqrt{-7})$ respectively, we indeed find an order $24$ character $\chi$ of the ray class group $\operatorname{Cl}_{\mf{P}_2^{6} \mf{P}_7} (L) \cong \Z/24\Z \times \Z/2\Z$ valued in $\F_{49}^{\times}$, for which the Frobenius traces of  $\rho \defeq \Ind{G_K}{G_L} \chi$ agree with the Hecke eigenvalues of $f$ for primes up to norm $200$.
  The nontrivial element $\sigma$ of $\Gal(L/K)$ acts by $\chi^{\sigma} = \chi^{7}$, forcing the trace of Frobenius to land in $\F_7$.

  However, $L$ is not Galois over $\Q$ and contains an imaginary quadratic field, so it is not CM.
  By enumerating the extensions of $L$ ramified at $2$ and $7$, it is not hard to see that $\rho$ is not induced from a character of a CM extension of $K$.
  On the other hand, a simple analysis of possible infinity types shows that $\rho$ cannot lift to a characteristic $0$ theta series unless the corresponding extension is CM.

  This does not exclude the possibility that $f$ has non-dihedral lifts.
  The sparsity of Hecke eigenvalues means that level-raising primes are rare, but they do exist: up to norm $2000$ they are:
  \texttt{239.2},  \texttt{449.1},  \texttt{449.2},  \texttt{743.2},  \texttt{827.1},  \texttt{953.1}, \texttt{1163.1}, \texttt{1499.1}, \texttt{1597.2},  \texttt{1709.1},  \texttt{1723.2}, and \texttt{1933.2}.
  We note that all of these primes $\mf{q} = \mf{Q}\mf{Q}'$ split in $L$, with $\chi(\mf{Q}) = \pm 2$ and $\norm(\mf{q}) \equiv 1 \Mod 7$.

  \begin{table}[ht]
\centering
\setlength{\tabcolsep}{5pt}
\begin{tabular}{l ccccccccccc}
$\p$ & \texttt{2.2} & \texttt{9.1} & \texttt{11.1} & \texttt{11.2}
               & \texttt{23.1} & \texttt{23.2} & \texttt{25.1} & \texttt{29.1}
               & \texttt{29.2} & \texttt{37.1} & \texttt{37.2} \\
\midrule
$\mf{p}\OO_L$ & inert & inert & split & inert & split & split & inert & split & inert & inert & split \\
$a_{\p}(f)$ & $0$ & $0$ & $6$ & $0$ & $0$ & $5$ & $0$ & $3$ & $0$ & $0$ & $2$ \\
\end{tabular}\caption{Hecke eigenvalues in $\F_7$ of the ethereal system $f$ of level $\texttt{512.1} = (w)^{9}$ over $\Q(\sqrt{-7})$, with the splitting of $\p$ in $L = K(\sqrt{-w})$ in the middle row.}
\label{tab:dihedral-example}
\end{table}
\end{example}

\subsection{Results of the computations}\label{sec:results-of-comps}

In this section, we summarise various statistics relating to ethereal eigenvalue systems and their lifts. Our data consists of all ethereal, non-cyclotomic-Eisenstein eigenvalue systems over the five Euclidean imaginary quadratic fields, appearing in $H^1(\Gamma_0(\n),\F_p)$, up to $\norm(\n) \le 1000$. 

In Table \ref{table:lmfdb-lifts}, we list the proportion of ethereal eigenvalue systems in our data for which a lift was found using the LMFDB.
As the LMFDB currently only stores rational systems, this table is restricted only to the $\F_p$-valued systems.
\begin{table}[H]
  \centerline{
	\begin{tabular}{c|ccc|ccc}
		$d$  & \shortstack{Ethereal $\F_p$-systems \\ ($p < 20$)} & Admissible &\shortstack{LMFDB \\lifts} & \shortstack{Ethereal $\F_p$-systems \\ (all $p$)} & Admissible & \shortstack{LMFDB \\ lifts}  \\ \midrule
		$1$ & 152 & 136 & 122 & 188 & 136 & 122 \\
		$2$ & 390 & 301 & 265 & 654 & 304 & 268 \\ 
		$3$ & 85  & 77  & 71  & 90 & 77 & 71 \\ 
		$7$ & 289 & 230 & 200 & 500 & 233 & 202 \\ 
		$11$ & 489& 388 & 328 & 890 & 389 & 329 \\
	\end{tabular}
        }
	\caption{The second column lists the number of ethereal $\F_p$-valued eigenvalue systems. The third counts the number of forms where the existence of a lift is not precluded by the Ramanujan bound, and the fourth the number of systems with a lift in the LMFDB.}\label{table:lmfdb-lifts}
\end{table}
As noted in Remark \ref{rmk:blocking}, the Ramanujan bound predicts that not all rational systems should lift to an elliptic curve.
We call an $\F_p$-valued eigenvalue system $f$ \emph{admissible} if for every prime ideal $\p$ of $\OO_K$ coprime to $\mf{n}$ and $p$, either $\p$ is a level-raising prime for $f$, or there exists an integer $M$ with $M \equiv a_{\p}(f) \Mod p$ and $\abs{M} \le 2\sqrt{\norm(\p)}$.
 
When restricting to admissible forms, we see that the ratio of forms liftable in the LMFDB improves substantially, suggesting that the Ramanujan bound is the main obstruction to finding rational lifts.

Finally, we record the data for all ethereal eigenvalue systems valued in $\overline{\F}_p$ in Table \ref{table:all-lifts}.
\begin{table}[H]
	\begin{tabular}{c|ccc|ccc}
		$d$  & \shortstack{Ethereal \\ ($p < 20$)} & Lifts & Verified &  \shortstack{Ethereal \\ (all $p$)}& Lifts & Verified  \\ \midrule
		$1$ & $177$ & $140$ &  $87$  & $213$ & $142$ & $87$ \\ 
		$2$ & $451$ & $295$ &  $181$ & $717$ & $303$ & $184$ \\ 
		$3$ & $103$ & $83$  &  $62$  & $108$ & $84$  & $62$ \\ 
		$7$ & $363$ & $223$ &  $139$ & $574$ & $238$ & $141$ \\ 
		$11$ & $592$ & $361$ & $149$ & $996$ & $377$ & $149$ \\ 
	\end{tabular}
	\caption{Count of lifts of ethereal Bianchi modular forms, not restricted to lifts in the LMFDB.}\label{table:all-lifts}
\end{table}

\printbibliography

\end{document}